\newtheorem{theorem}{Theorem}
\newtheorem{corollary}[theorem]{Corollary}
\newtheorem{proposition}[theorem]{Proposition}
\newtheorem{lemma}[theorem]{Lemma}
\theoremstyle{definition}
\newtheorem{remark}[theorem]{Remark}
\newtheorem{definition}[theorem]{Definition}
\numberwithin{theorem}{section}
\numberwithin{equation}{section}
\newcommand{\Lra}{\Longrightarrow}
\newcommand{\R}{\mathbb{R}}
\newcommand{\N}{\mathbb{N}}
\newcommand{\pa}{\partial}
\renewcommand{\div}{\,{\rm div}\,}
\newcommand{\dist}{{\rm dist}}
\newcommand{\B}{{\bf B}}
\newcommand{\eps}{\varepsilon}
\renewcommand{\phi}{\varphi}
\DeclareMathOperator{\id}{id}
\DeclareMathOperator{\loc}{loc}
\renewcommand{\epsilon}{\varepsilon}
\DeclareMathOperator{\sgn}{{sgn}}
\let\psfragfont\footnotesize
\let\psfragfonta\tiny
\begin{document}

\title[The unique continuation property of sublinear equations]{The unique continuation property of sublinear equations}

\author {Nicola Soave, Tobias Weth}

\date{\today}

\address{Dipartimento di Matematica
Politecnico di Milano, 
Via Edoardo Bonardi 9, 20133 Milano, Italy}

\email{nicola.soave@gmail.com; nicola.soave@polimi.it}

\address{Institut f\"ur Mathematik, Goethe-Universit\"at Frankfurt, D-60629 Frankfurt am Main, Germany}

\email{weth@math.uni-frankfurt.de}

\keywords {Sublinear equations, Unique continuation property}

\subjclass[2010]{Primary: 35B60, 35B05; Secondary: 35J61.}

\thanks{\em{Acknowledgements:} Nicola Soave is partially supported through the project ERC grant 2013 n. 339958 ``Complex Patterns for Strongly Interacting Dynamical Systems - COMPAT'', through the project PRIN-2015KB9WPT\texttt{\char`_}010 Grant: ``Variational methods, with applications to problems in mathematical physics and geometry", and through the INDAM-GNAMPA project ``Aspetti non-locali in fenomeni di segregazione".}

\begin{abstract}
We derive the unique continuation property of a class of semi-linear elliptic equations with non-Lipschitz nonlinearities. The simplest type of equations to which our results apply is given as $-\Delta u = |u|^{\sigma-1} u$ in a domain $\Omega \subset \R^N$, with $0 \le \sigma <1$. Despite the sublinear character of the nonlinear term, we prove that if a solution vanishes in an open subset of $\Omega$, then it vanishes necessarily in the whole $\Omega$. We then extend the result to equations with variable coefficients operators and inhomogeneous right-hand side. 
\end{abstract}

\maketitle

\section{Introduction}
\label{sec:introduction}

The unique continuation principle is an important tool in the theory of linear partial differential equations, and it has been studied extensively in the case of linear Schr{\"o}dinger operators. Let $\Omega \subset \R^N$ and $V \in L^1_{\loc}(\Omega)$. The Schr{\"o}dinger type 
equation 
\begin{equation}
\label{linear-general-laplace}
-\Delta u = V(x) u \qquad \text{in $\Omega$} 
\end{equation}
is said to have the {\em unique continuation property (UCP)} if every solution $u$ on $\Omega$ which vanishes on an open subset of $\Omega$ is identically 
zero. In recent decades, various classes of potential functions $V$ have been shown to give rise to this property. For a detailed account of results of this type, we refer to the surveys \cite{kenig:89,koch.tataru:2001}. The unique continuation property is interesting per se, and it has many important consequences. In particular, it is closely related to the strict monotonicity of Dirichlet eigenvalues of Schr{\"o}dinger operators with respect to domain inclusion \cite{de-Figueiredo.gossez:92} and to the absence of eigenvalues inside the essential spectrum \cite{jerison.kenig:85,koch.tataru:2006}. It also gives rise to important energy estimates and compactness properties in the context of semilinear elliptic boundary boundary value problems with a variational structure, see e.g. \cite{liu.su.weth:2006,heinz:95}. 

We also recall two stronger variants of the unique continuation property for (\ref{linear-general-laplace}), and for this we assume for simplicity that $V \in L^\infty_{\loc}(\Omega)$. First, (\ref{linear-general-laplace}) is said to have the {\em strong unique continuation property (SUCP)} if every weak solution $u \in H^1_{\loc}(\Omega)$ which has a zero of infinite order in $\Omega$ is identically zero. Here $x_0 \in \Omega$ is called a zero of infinite order if
$$
\int_{B_r(x_0)}u^2\,dx = O(r^n) \qquad \text{as $r \to 0^+$, for every $n \in \N$.}
$$
Moreover, we say that (\ref{linear-general-laplace}) has the {\em unique continuation property in measure (UCPM)}
if every weak solution $u \in H^1_{\loc}(\Omega)$ which vanishes on a set of positive measure is identically zero. 

The above notions of unique continuation can be generalized to the semilinear equation
\begin{equation}
\label{nonlinear-general-laplace}
-\Delta u = f(x,u) \qquad \text{in $\Omega$}.
\end{equation}
In the case where the nonlinearity $f$ satisfies $f(\cdot,0)=0$ on $\Omega$ and the function 
\begin{equation}
  \label{eq:definition-tilde-f}
(x,u) \mapsto \tilde f(x,u):=
\left\{
  \begin{aligned}
  &\frac{f(x,u)}{u},&& \qquad u \not = 0,\\
  &0,&& \qquad  u = 0, 
  \end{aligned}
\right.
\end{equation}
is locally bounded in $x$ and $u$, any solution of (\ref{nonlinear-general-laplace}) satisfies (\ref{linear-general-laplace}) with $V \in L^\infty_{\loc}(\Omega)$ given by $V(x)=\tilde f(x,u(x))$. Hence the unique continuation results for (\ref{linear-general-laplace}) carry over immediately in this case. In particular, this is true for Lipschitz nonlinearities $f$ with $f(\cdot,0)=0$. The picture changes drastically in the non-Lipschitz case where the function $\tilde f$ defined in (\ref{eq:definition-tilde-f}) is not locally bounded and could even fail to belong to $L^1_{\loc}(\Omega)$. The simplest class of nonlinearities of this type are sublinear homogeneous nonlinearities of order less than one given by 
\begin{equation}
  \label{eq:sublinear-homog-nonli}
f_q(u)=|u|^{q-2}u, \qquad q \in (1,2), \qquad \quad f_1(u)=\sgn(u),\qquad q=1,
\end{equation}
where $\sgn$ is the sign function. It is well known that already the ODE
$$
u'' = |u|^{q-2}u 
$$
admits, in the case $1< q <2$, nontrivial solutions $u \in C^2(\R)$ violating the unique continuation principle, as they can be chosen of the form $u(t)=  \Bigl( \frac{2q}{(2-q)^2}\Bigr)^{\frac{1}{q-2}} (t-t_0)^{\frac{2}{2-q}}$ for $t >t_0$ and $u(t)=0$ for $t \le t_0$, with $t_0 \in \R$ chosen arbitrarily.  On the other hand, it is very easy to see that unique continuation holds for the sublinear ODE with opposite sign, i.e., for
\begin{equation}
\label{eq:ode-correct-sign}
-u'' = |u|^{q-2}u.
\end{equation}
Indeed, since the quantity $\frac{{u'}^2}{2}+ \frac{|u|^q}{q}$ is constant and nonzero along nontrivial solutions, $u$ may only have simple zeros. 

It is clear that ODE arguments of this type do not apply to the higher dimensional analogue of (\ref{eq:ode-correct-sign}) given by the equation 
\begin{equation}
  \label{eq:sublinear-q-eq}
-\Delta u = f_q(u) \qquad \text{in $\Omega$.}
\end{equation}
The study of (\ref{eq:sublinear-q-eq}) is motivated, in particular, by its close relationship to the 
(sign changing) porous medium equation 
\begin{equation}
  \label{eq:scpme}
w_t -\Delta |w|^{m-1}w=0 \qquad \qquad \text{with $\quad m=\frac{1}{q-1} > 1$.}  
\end{equation}
Indeed, as discussed in \cite[Chapter 4]{vazquez:2007}, a solution $u$ of (\ref{eq:sublinear-q-eq}) gives rise to a solution of (\ref{eq:scpme}) with separate variables via the ansatz
$$
(x,t) \mapsto w(x,t)=\Bigl(\frac{2-q}{q-1}(t-t_0)\Bigr)^{-\frac{q-1}{2-q}}|u(x)|^{q-2}u(x), \qquad \qquad t >t_0,\;x \in  \Omega.
$$ 
We remark that nonnegative solutions of (\ref{eq:sublinear-q-eq}) trivially obey the unique continuation principle, as they either vanish identically or they are positive by the strong maximum principle for superharmonic functions. Moreover, if a sign changing solution $u \in C^2(\Omega)$ of (\ref{eq:sublinear-q-eq}) has the property that the set $\{x \in \Omega\::\: u(x) \not = 0\}$ only has finitely many components which all satisfy the interior sphere condition, then the Hopf boundary point Lemma easily implies that $u^{-1}(0)$ is an $(N-1)$-dimensional hypersurface and therefore has zero $N$-dimensional Lebesgue-measure. In particular, this property is shared by radial solutions of (\ref{eq:sublinear-q-eq}) in bounded radial domains, but it fails to hold for general sign changing solutions of (\ref{eq:sublinear-q-eq}).
  
Part of the movitation for the present paper arises from the recent work \cite{parini.weth:2015} where the unique continuation principle has been studied for a special class of sign changing solutions of (\ref{eq:sublinear-q-eq}).
More precisely, in \cite{parini.weth:2015}, Parini and the second author focus on the class of least energy sign changing solutions of the Neumann problem for (\ref{eq:sublinear-q-eq}) in a bounded Lipschitz domain. Combining variational arguments with a blow up procedure and local perturbation arguments, they prove that the zero set of least energy sign changing solutions has Lebesgue measure zero. However, the variational perturbation technique developed in \cite{parini.weth:2015} does not extend to higher energy sign changing solutions. We note that both under Dirichlet and under Neumann boundary conditions, (\ref{eq:sublinear-q-eq}) is known to admit an infinite sequence of sign changing solutions which converges to the trivial solution, see \cite{du:preprint,bartsch.willem:95}. For these solutions, the unique continuation property is unknown up to now. 

One of the main aims of the present paper is to establish the unique continuation property of (\ref{eq:sublinear-q-eq}) without additional assumptions on the solutions. Our first main result is the following.

\begin{theorem}\label{thm: weak unique}
Let $q \in [1,2)$. If a weak solution $u \in H^1_{\loc}(\Omega)$ of (\ref{eq:sublinear-q-eq}) vanishes in a neighbourhood of a point $x_0 \in \Omega$, 
then $u \equiv 0$ in $\Omega$.
\end{theorem}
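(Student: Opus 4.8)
The plan is to argue by contradiction: reduce to a point where the zero set of $u$ is internally tangent to a ball but $u$ is nontrivial arbitrarily nearby, blow up there to produce a nonzero homogeneous entire solution vanishing on a half-space, and exclude such solutions by an induction on the dimension whose base case is the one-dimensional energy identity recalled in the introduction.

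Assume $u\not\equiv0$. Bootstrapping in the equation (using that $t\mapsto|t|^{q-2}t$ is continuous and bounded near $0$) gives $u\in C^{1,\alpha}_{\loc}(\Omega)$, so $Z:=\{u=0\}$ is closed; by a standard reduction we may assume that $u$ vanishes on a ball $B=B_{R}(y_0)$ with $\overline B\subset\Omega$ and that there is $z_0\in\partial B\cap\Omega$ with $u\not\equiv0$ in every neighbourhood of $z_0$; then also $u(z_0)=0$ and $\nabla u(z_0)=0$. The exponent governing everything is $s:=\frac{2}{2-q}\ge2$, for which $-\Delta u=|u|^{q-2}u$ is invariant under $u\mapsto\lambda^{-s}u(\lambda\,\cdot)$. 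A Schauder bootstrap across $\partial B$ — if $|u|\lesssim\dist(\cdot,\partial B)^{\beta}$ then $|\Delta u|=|u|^{q-1}\lesssim\dist(\cdot,\partial B)^{(q-1)\beta}$, hence $|u|\lesssim\dist(\cdot,\partial B)^{(q-1)\beta+2}$, and $\beta\mapsto(q-1)\beta+2$ is a contraction with fixed point $s$ — shows $u$ vanishes at $z_0$ at least to order $s$.

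The core tool is an Almgren-type monotonicity formula for the semilinear equation. With $H(r)=\int_{\partial B_r(z_0)}u^2$, $I(r)=\int_{B_r(z_0)}|\nabla u|^2-\int_{B_r(z_0)}|u|^q=\int_{\partial B_r(z_0)}u\,u_\nu$, and $N(r)=rI(r)/H(r)$, I would show that $N$ is monotone up to a controllable error. The usual obstruction — that the effective potential $|u|^{q-2}$ is not in $L^\infty_{\loc}$ — is circumvented by scaling invariance: in the computation of $N'(r)$ every error term produced by the nonlinearity can be grouped with $\int_{B_r(z_0)}|u|^q$ and its radial derivative $\int_{\partial B_r(z_0)}|u|^q$, which are finite and appear with exactly the homogeneity imposed by $u\mapsto\lambda^{-s}u(\lambda\,\cdot)$, so they do not destroy (almost-)monotonicity. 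Consequently $N(0^+)=:N_0$ exists, a doubling inequality $H(2r)\le CH(r)$ holds near $z_0$, and the rescalings $u_r(x)=u(z_0+rx)\big/\big(r^{1-N}H(r)\big)^{1/2}$ are bounded in $C^{1,\alpha}_{\loc}(\R^N)$ and converge, along some $r\to0$, to a blow-up $U\not\equiv0$ that is homogeneous of degree $N_0$; since the ball $B\subset Z$ is internally tangent at $z_0$, $U$ vanishes on a closed half-space, say $\{x_N\le0\}$. By the bootstrap, $N_0\ge s$; and if $N_0>s$ the rescaled nonlinear coefficient diverges in the limit, which forces $U\equiv0$ (alternatively, for any $N_0\neq s$ the rescaled nonlinearity either degenerates or disappears, in the latter case leaving $U$ harmonic and hence, being zero on an open set, $\equiv0$). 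Therefore $N_0=s$ and $-\Delta U=|U|^{q-2}U$ in $\R^N$.

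It remains to show that there is no nontrivial $U\in C^{1,\alpha}_{\loc}(\R^N)$, homogeneous of degree $s$, solving $-\Delta U=|U|^{q-2}U$ and vanishing on a half-space; this I would prove by induction on $N$. Writing $U(x)=|x|^sg(x/|x|)$ and using $s(q-1)=s-2$, the equation becomes $-\Delta_{S^{N-1}}g=s(s+N-2)\,g+|g|^{q-2}g$ on $S^{N-1}$, with $g$ vanishing on the open lower hemisphere. This is again a sublinear equation of the present type, now on the $(N-1)$-dimensional manifold $S^{N-1}$ and perturbed by a bounded zeroth-order linear term, whose solution vanishes on an open set; by the inductive hypothesis in the variable-coefficient form — which in local coordinates is exactly a statement about an equation in a domain of $\R^{N-1}$ — one gets $g\equiv0$, hence $U\equiv0$, a contradiction. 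The base case $N=1$ is the energy identity: $\frac12(u')^2+\frac1q|u|^q$ is constant and vanishes on the interval where $u=0$, so it vanishes identically; a bounded lower-order perturbation only perturbs this, since then the derivative of $E:=\frac12(u')^2+\frac1q|u|^q$ satisfies $|E'|\lesssim E+|uu'|\lesssim E+E^{1/q+1/2}\lesssim E$ for $E$ small (as $\frac1q+\frac12\ge1$), so $E(x_0)=0$ forces $E\equiv0$ by Gronwall. The main difficulty is the monotonicity formula: making rigorous that, although $|u|^{q-2}\notin L^\infty_{\loc}$, all error terms carry the homogeneity imposed by the natural scaling and hence are harmless — this, together with setting up the inductive statement in sufficient generality (which is also why the variable-coefficient extension is the natural framework), is where the real work lies.
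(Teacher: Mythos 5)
There is a genuine gap, and it sits exactly at the step you yourself flag as "where the real work lies": the (almost-)monotonicity of the Almgren frequency at the tangency point $z_0$, from which you want the existence of $N(0^+)$, a doubling inequality, and a nontrivial homogeneous blow-up. For the sublinear problem this is precisely what is lost (the paper stresses this in the introduction), and your justification is circular. Concretely, the derivative of $N$ carries the error term $-C\int_{B_r(z_0)}|u|^q/H(r)$; if $u$ vanishes at the critical rate $s=\frac{2}{2-q}$ then $\int_{B_r}|u|^q\sim r^{N+2s-2}$ while $H(r)\sim r^{N-1+2s}$, so this term is of size $1/r$ — not integrable — and its constant balances the favourable boundary term $\frac{(2-q)r}{q}\int_{S_r}|u|^q/H(r)$ only through an exact cancellation that holds for exactly homogeneous profiles. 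Thus "the errors carry the homogeneity imposed by the scaling" presupposes knowledge of the vanishing rate, which is exactly what the monotonicity formula is supposed to deliver. Worse, the nondegeneracy you need for the blow-up to be nonzero (a doubling bound $H(2r)\le C H(r)$ at the free-boundary point, i.e.\ boundedness of the frequency at $z_0$) is essentially a quantitative strong-unique-continuation statement at $z_0$; your Schauder bootstrap only gives an \emph{upper} bound on $|u|$ near $\partial B$, never a lower one, and the paper explicitly leaves SUCP/UCPM open for these equations. Without that, the rescalings $u_r$ may converge to $0$ and the whole classification step ($N_0=s$, half-space vanishing, induction on dimension via the spherical equation, which in any case would require the variable-coefficient Theorem 1.2 in dimension $N-1$) has nothing to act on.

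For contrast, the paper's proof avoids the free-boundary point and any blow-up altogether: it centers the balls at a point where $u\equiv 0$ in a neighbourhood, sets $d(r)=\frac1q\int_{B_r}|u|^q$ and $r_0=\sup\{r:d(r)=0\}>0$, and uses the Pohozaev-type identity for $D'$ to show $D(r)\ge \frac{2-q}{2}r_0^{N-2}d(r)>0$ on a small interval $(r_0,r_1)$ — possible only because $r-r_0$ is small there. On a maximal interval $(r_3,r_2]$ where $H>0$ this lower bound turns the (otherwise uncontrollable) error in $N'$ into $N'\ge -C_3N$, so $N$ is bounded and $\frac{d}{dr}\log\bigl(H(r)/r^{N-1}\bigr)$ is bounded, contradicting $H(r_3)=0$ at the positive radius $r_3\ge r_0$. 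If you want to rescue your scheme you would need to supply exactly this kind of local substitute for monotonicity (or a Weiss-type formula plus an independent nondegeneracy argument), since the global frequency monotonicity you invoke is not available here.
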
 

We remark at this point that one of the key tools to prove the unique continuation property of linear Schr{\"o}dinger equations of the type (\ref{linear-general-laplace}) are Carleman inequalities, which apply more generally to differential inequalities of the form 
$$
|\Delta u(x)| \le |V(x)u(x)|,\qquad x \in \Omega,
$$
see e.g. \cite{jerison.kenig:85}. Since these differential inequalities do not depend on the sign of $V$, Carleman inequalities do not seem to be the right tool to derive the unique continuation property for (\ref{eq:sublinear-q-eq}). We were instead strongly inspired by the papers \cite{GarLin86, GarLin91, GaSVG, Kuk}, where monotonicity properties and other tools related to the theory of free boundary problems are used. In \cite{GarLin86}, the authors study the equation 
\begin{equation}\label{div-eq-homogeneous}
-\div(A(x) \nabla u) = 0 \qquad \text{in $\Omega$},
\end{equation}
where the matrix function $A=(a_{ij}): \Omega \to \R^{N\times N}$ satisfies the following regularity and elliticity assumptions: 
\begin{itemize}
\item[(A1)] $A$ is symmetric with locally Lipschitz coefficients $a_{ij}: \Omega \to \R$, $i,j=1,\dots,N$, and there exists a continuous function $\lambda: \Omega \to (0,1)$ such that
\[
\lambda(x) |\xi|^2 \le \sum_{i,j=1}^N a_{ij}(x) \xi_i \xi_j \le \lambda^{-1}(x) |\xi|^2 \qquad \text{for $\xi \in \R^N$.}
\]
\end{itemize}
In particular, it is proved in \cite{GarLin86} that (\ref{div-eq-homogeneous}) has the strong unique continuation property. The same result is also proved for weak solutions to \eqref{linear-general-laplace}, under appropriate assumptions on the potential $V$. A unified treatment is later given in \cite{GarLin91}, where more general linear equations are considered. In \cite{Kuk}, the author provides bounds of the admissible orders of vanishing of a solution to \eqref{linear-general-laplace} on manifolds. Some of the tools introduced in \cite{Kuk} have later been adapted and used in \cite{GaSVG}, where the lower-dimensional obstacle problem for the operator $-\div (A(x)\nabla)$ is studied, and the optimal regularity of solutions is proved. 

As we shall see, these adaptations will be useful for us in the generalization of Theorem~\ref{thm: weak unique} involving general divergence type operators and a weaker sublinearity property. More precisely, we consider the equation 
\begin{equation}\label{div eq}
-\div(A(x) \nabla u)= V(x)u + f(x,u) \qquad \text{in $\Omega$}
\end{equation}
in a domain $\Omega \subset \R^N$ with a Borel-measurable function $f: \Omega \times \R \to \R$. 
Let $F: \Omega \times \R \to \R,\: F(x,s)= \int_0^s f(x,t)\,dt$, denote the primitive of $f$. In addition to (A1), we assume the following:
\begin{itemize}
\item[(A2)] $V \in L^\infty_{\loc}(\Omega)$.\vspace{0.1cm}
   
\item[(A3)] $f \in L^\infty_{\loc}(\Omega \times \R)$, and there exists $\eps_0, \kappa_1,\kappa_2>0$ and $q<2$ such that
  \begin{enumerate}
  \item[i)] $0 <f(x,s)s \le q F(x,s)$ for a.e. $s \in (-\eps_0, \eps_0) \setminus \{0\}$, $x \in \Omega$,\vspace{0.1cm}

  \item[ii)] the function $F(\cdot,s)$ is of class $C^1$ on $\Omega$ for every $s \in (-\eps_0,\eps_0)$,\vspace{0.1cm}

 \item[iii)] $|\nabla_1 F(x,s)| \le \kappa_1 F(x,s)$ for all $x \in \Omega$, $s \in (-\eps_0,\eps_0)$,\vspace{0.1cm}

 \item[iv)] $F(x,s) \ge \kappa_2$ for all $x \in \Omega$, $s \in \{-\eps_0,\eps_0\}$.\vspace{0.1cm}

   \end{enumerate}
\end{itemize}
Here we have set $\nabla_1 F = \bigl(\frac{\partial F}{\partial x_1},\dots,\frac{\partial F}{\partial x_N}\bigr)$. In the following, we say that $u \in H^1_{\loc}(\Omega) \cap L^\infty_{\loc}(\Omega)$ is a weak solution of (\ref{div eq}) if the function $x \mapsto f(x,u(x))$ is Lebesgue-measurable on $\Omega$ and 
$$
\int_\Omega  \langle A(x)\nabla u(x), \nabla \phi(x) \rangle\,dx = \int_{\Omega} \Bigl(V(x)u(x)  + f(x,u(x))\Bigr)\phi(x)\,dx
$$
for all $\phi \in C_{c}^\infty(\Omega)$.

\begin{theorem}\label{thm: main 2}
Suppose that (A1)--(A3) are satisfied. If a weak solution $u \in H^1_{\loc}(\Omega) \cap L^\infty_{\loc}(\Omega)$ of (\ref{div eq}) vanishes in a neighbourhood of a point $x_0 \in \Omega$, 
then $u \equiv 0$ in $\Omega$.
\end{theorem}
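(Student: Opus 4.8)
The plan is to mimic the strategy used for Theorem~\ref{thm: weak unique} but adapted to the variable-coefficient, inhomogeneous setting, the key point being that the hypotheses (A1)--(A3) are exactly what is needed to make a frequency-function (Almgren-type monotonicity) argument go through. I would first reduce to a local statement: by a standard connectedness argument it suffices to show that if $u$ vanishes in a ball $B_{r_0}(x_0) \subset \Omega$ then it vanishes in a slightly larger concentric ball, so that the set where $u$ vanishes in a neighbourhood is both open and closed in $\Omega$. Fix then a point on the boundary of the (maximal) vanishing region; after a translation assume it is the origin, and after the smooth change of variables that flattens the principal part of the operator at that point (using (A1), i.e. Lipschitz coefficients) we may assume $A(0)=\mathrm{Id}$, so the operator is a small perturbation of the Laplacian near $0$.

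**Next** I would set up the generalized frequency function. For $u$ a solution of (\ref{div eq}), define the ``energy'' $D(r) = \int_{B_r} \langle A \nabla u, \nabla u\rangle\,\mu$ and the ``mass'' $H(r) = \int_{\partial B_r} u^2\, \mu$ for a suitable geodesic weight $\mu$ determined by $A$ (as in \cite{GarLin86,GaSVG}), and consider $N(r) = r D(r)/H(r)$. The crucial computation is the differential inequality $\frac{d}{dr}\log N(r) \ge -C$ (or an integrable perturbation thereof) for $r$ small. Here is where (A2) and (A3) enter: multiplying the equation by $u$ and integrating, the term $\int V u^2$ is controlled by $\|V\|_{L^\infty_{\loc}}\|u\|_{L^2}^2$, hence absorbed into the monotonicity error as in the linear theory; the nonlinear term contributes $-\int f(x,u)u$, and condition (A3)(i), $0 < f(x,s)s \le qF(x,s)$ with $q<2$, is precisely the subcriticality that lets the Rellich--Pohozaev/differentiation identity close — it makes the ``bad'' sign of $\int f u$ in $D'(r)$ dominate, exactly as the constancy of $\tfrac{(u')^2}{2}+\tfrac{|u|^q}{q}$ did in the ODE discussion. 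Conditions (A3)(ii)--(iii) ($F(\cdot,s)\in C^1$ with $|\nabla_1 F|\le \kappa_1 F$) are needed so that the Pohozaev-type identity involves $\int \nabla_1 F \cdot (\text{radial field})$, which is then absorbed using $|\nabla_1 F|\le \kappa_1 F$ and the sign of $F$ coming from (A3)(i).

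**The key consequence** of the monotonicity is a doubling-type estimate: $N$ has a finite limit $N(0^+)$ and $H(2r) \le C\, H(r)$ for small $r$; combined with the fact that $u$ vanishes on a whole ball inside $B_{r_0}$, one shows $H(r)$ vanishes faster than any power of $r$, and then the doubling inequality forces $H \equiv 0$ near the origin — equivalently $u$ vanishes in a full neighbourhood of the origin, contradicting that $0$ was on the boundary of the vanishing set. Actually, more robustly: the vanishing-on-an-open-set hypothesis makes $D(r_0)=0$ for the initial ball, which via unique continuation for the \emph{linearized} inequality (or directly via the doubling estimate) propagates. **The main obstacle** I anticipate is making the frequency-function computation rigorous at the level of weak $H^1_{\loc}\cap L^\infty_{\loc}$ solutions with merely Lipschitz coefficients and a merely $L^\infty_{\loc}$ potential and nonlinearity: the Rellich--Pohozaev identity that underlies $D'(r)$ is not a priori available for such rough data, so one must either establish enough interior regularity (here $L^\infty_{\loc}$ plus elliptic estimates give $u \in C^{1,\alpha}_{\loc}$ once the right-hand side is bounded, which (A2)--(A3) guarantee since $f\in L^\infty_{\loc}(\Omega\times\R)$) or carry out the differentiation in an averaged/integrated form and justify it by approximation. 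A secondary technical point is handling the case $q=1$ uniformly with $q\in(1,2)$, and ensuring all the absorbed error terms are integrable in $r$ near $0$ so that $\log N$ is bounded; conditions (A3)(iv) ($F\ge\kappa_2$ on $|s|=\eps_0$) together with (iii) should give the needed uniform lower control on $F$ to prevent degeneration.
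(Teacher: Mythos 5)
Your overall framework (reduction by connectedness, affine normalization $A(x_0)=\id$, Almgren-type quantities $H$, $D$, $N$ built on the weight $\mu$ as in \cite{GarLin86,GaSVG}) matches the paper, but the central step of your argument has a genuine gap. You claim that (A2)--(A3) yield the almost-monotonicity $\frac{d}{dr}\log N(r)\ge -C$ for all small $r$, i.e.\ that the sublinearity condition (A3)i) ``closes'' the Pohozaev/Rellich identity so that the frequency is almost monotone as in the linear theory. This is exactly what fails here: even in the model case, Proposition~\ref{der N} only gives $N'(r)\ge -C\,d(r)/H(r)$ with $d(r)=\frac1q\int_{B_r}|u|^q$, and to convert this into $N'(r)\ge -C\,N(r)$ one needs a lower bound of the form $D(r)\ge c\,d(r)$, which does \emph{not} follow from the structure of the equation alone (the paper stresses in the introduction that the almost-monotonicity of the frequency is lost for these sublinear problems, and correspondingly SUCP remains open). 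The actual proof obtains \eqref{lem: lower-bound-D-1}, $D(r)\ge C_3\,d(r)>0$, only on a small interval $(r_0,r_1)$ just beyond the radius $r_0$ of the ball on which $u$ vanishes, by integrating the differential inequality for $D'$ (Corollary~\ref{cor-der-N-1}) from the initial data $D(r_0)=d(r_0)=0$ and exploiting that $\|u\|_{L^\infty(B_r)}^{2-q}\to 0$ as $r\to r_0^+$ by continuity, so that the coefficient $(2-q)-C\|u\|_{L^\infty}^{2-q}$ of $d'$ stays positive (see \eqref{eq:u-continuity-est}); here both the strict sublinearity $2-q>0$ and the vanishing hypothesis are used in an essential way. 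Your proposal never supplies this mechanism, and the alternative you sketch (``unique continuation for the linearized inequality'') cannot work because the linearized potential $f(x,u)/u$ is not locally bounded -- this is the whole difficulty the theorem addresses, and Carleman-type arguments are sign-blind.

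Relatedly, your endgame is misdirected: you place the origin at a boundary point of the vanishing region and argue via a doubling estimate $H(2r)\le CH(r)$ and infinite-order vanishing as $r\to 0$, which would require the frequency bound down to $r=0$ at a point near which $u$ is \emph{not} identically zero -- precisely the strong-unique-continuation-type statement that is unavailable. The paper instead centers the balls at a point interior to the vanishing set, fixes $r_2\in(r_0,r_1)$ with $H(r_2)\neq 0$ and $r_3=\inf\{r:H>0 \text{ on } (r,r_2)\}\ge r_0>0$, uses the bound $D\ge C_3 d$ to get $N'(r)/N(r)\ge -C_4$ on $(r_3,r_2]$, and concludes because $\frac{d}{dr}\log\bigl(H(r)/r^{N-1}\bigr)$ is then bounded on $(r_3,r_2]$ while $\log\bigl(H(r)/r^{N-1}\bigr)\to-\infty$ as $r\to r_3^+$ (note $r_3>0$, so no behaviour as $r\to 0$ is needed). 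To repair your proof you should replace the claimed global almost-monotonicity and doubling argument by this local propagation step beyond the vanishing radius; the regularity issues you raise ($u\in W^{2,p}_{\loc}\cap C^{1,\alpha}_{\loc}$, weak Gauss--Green) are handled as you expect and are not the obstruction.
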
 

\begin{remark}
It is easy to see that the class of homogenous sublinear nonlinearities $f_q$ given in (\ref{eq:sublinear-homog-nonli}) satisfies assumption (A3). More generally, Theorem \ref{thm: main 2} applies to weak solutions $u \in H^1_{\loc}(\Omega)$ of the equation 
\[
-\div(A(x)\nabla u) = h(x,s) + \sum_{k=1}^m c_k(x) |s|^{q_k-2} s \qquad \text{in $\Omega$},
\]
where $A$ satisfies $(A1)$, $q_k \in [1,2)$ for $k=1,\dots,m$, $c_1,\dots, c_m \in C^1(\Omega)$ are positive functions such that $\frac{\nabla c_1}{c_1}, \dots, \frac{\nabla c_m}{c_m} \in L^\infty(\Omega,\R^N)$,  and $h$ is (weakly) superlinear in the sense that $\tilde h \in L^\infty_{\loc}(\Omega \times \R)$ for the 
function 
$$
(x,s) \mapsto \tilde h(x,s):=
\left\{
  \begin{aligned}
  &\frac{h(x,s)}{s},&& \qquad s \not = 0,\\
  &0,&& \qquad  s = 0. 
  \end{aligned}
\right.
$$
Indeed, in this case, every weak solution $u \in H^1_{\loc}(\Omega)$ is contained in $C^{1,\alpha}_{\loc}(\Omega)$ for some $\alpha>0$ by elliptic regularity (see e.g. \cite[Theorem 3.13]{han.lin:2011}), and thus assumptions (A1) and (A2) are satisfied with $V(x) = \tilde h(x,u(x))$. \\
We point out that, while we have to impose the positivity of the sublinear term $f$, no sign-assumption is needed on the superlinear one $h$.
\end{remark}

As far as we know, Theorems~\ref{thm: weak unique} and \ref{thm: main 2} are the first unique continuation results for general solutions of sublinear equations. These results will be proved by combining key estimates from \cite{GarLin86, GaSVG} related to the linear operator $-\div (A(x) \nabla)$ with new arguments to deal with the zero order nonlinearity in the equations (\ref{eq:sublinear-q-eq}) and (\ref{div eq}). We stress that the presence of the sublinear term on the right hand side in (\ref{eq:sublinear-q-eq}) and (\ref{div eq}) drastically changes the features of the problem; in particular, the ``almost-monotonicity of the Almgren's frequency", the key tool in proving the strong unique continuation in \cite{GarLin86, GarLin91, Kuk}, is lost.

We point out that it remains open whether (\ref{eq:sublinear-q-eq}) or (\ref{div eq}) give rise to the strong unique continuation property or the unique continuation property in measure. 

The paper is organized as follows. In Section~\ref{sec: proof simple case} we deal with the special equation (\ref{eq:sublinear-q-eq}). Due to the homogeneity of the nonlinearity and the simple form of the linear part 
of (\ref{eq:sublinear-q-eq}), the proof of Theorem~\ref{thm: weak unique} is considerably easier than the proof of Theorem~\ref{thm: main 2}, and it is instructive to elaborate the main ideas and estimates related to the sublinearity of the problem in this special case. In Section~\ref{sec:general-case}, we then give the proof of Theorem~\ref{thm: main 2}.   

\section{The model problem}\label{sec: proof simple case}

This section is devoted to the proof of Theorem~\ref{thm: weak unique}. From now on, we assume that $u \in H^1_{\loc}(\Omega)$ is a weak solution of 
the equation (\ref{eq:sublinear-q-eq}) for some $q \in [1,2)$. In the case $q \in (1,2)$, classical elliptic regularity yields $u \in C^{2,\alpha}_{\loc}(\Omega)$ for some $\alpha>0$. In the case $q=1$, we still have that $u \in W^{2,p}_{\loc}(\Omega)$ for all $p<\infty$ (see e.g. \cite[Theorem 9.11]{GT}), which implies that $u \in C^{1,\alpha}_{\loc}(\Omega)$ for all $\alpha \in (0,1)$. Moreover, $u$ is a strong solution of (\ref{eq:sublinear-q-eq}), and it follows from the definition of 
weak derivatives that $D^2 u \in L^{p}_{\loc}(\Omega, \R^{N \times N})$ is an a.e. symmetric matrix in $\Omega$.

To prove Theorem~\ref{thm: weak unique}, we now assume that $u \equiv 0$ in a neighborhood of a point $x_0 \in \Omega$. By translations, we may suppose that $x_0=0$, which simplifies some expressions in the following. We then need to prove that 
\begin{equation}
  \label{eq:claim-weak-unique}
u \equiv 0 \qquad \text{in $\Omega$.}
\end{equation}
For $0<r< \dist(0,\R^N \setminus \Omega)$, we define 
$B_r:= B_r(0)$, $S_r:= \partial B_r$, 
$$
H(r):=  \int_{S_r} u^2\,d\sigma \qquad \text{and}\qquad
D(r):= \int_{\Omega}\Bigl(|\nabla u|^2 - |u|^q\Bigr)\,dx = \int_{S_r} u u_\nu\,d\sigma,
$$
where, here and in the following, we set $\nu(x)= \frac{x}{|x|}$ for $x \not =0$ and let $u_\nu(x)= \langle \nabla u(x), \frac{x}{|x|}\rangle$ denote the radial derivative. From now on, we often omit the volume element $dx$ and the surface element $d \sigma$ inside the integrals. It is clear that integrals on $B_r$ or on $S_r$ are computed, respectively, with respect to the Lebesgue measure in $\R^N$ or with respect to the $(N-1)$-dimensional Hausdorff measure.
Then we have 
$$
H'(r)=\frac{N-1}{r}H(r) + 2 \int_{S_r}u u_\nu = \frac{N-1}{r}H(r) + 2 D(r).
$$
As a consequence, whenever $H(r) \neq 0$ we have
\begin{equation}
  \label{eq:derivative-1}
\frac{d}{dr}\Bigl(\log \frac{H(r)}{r^{N-1}}\Bigr) = 
\frac{H'(r)}{H(r)} - \frac{N-1}{r}= 2 \frac{D(r)}{H(r)}= 2 \frac{N(r)}{r}
\end{equation}
where $N$ is the (Almgren frequency) function 
$$
r \mapsto N(r):= \frac{rD(r)}{H(r)}.
$$
In \cite{GarLin86}, the authors could show the Almgren frequency associated to linear equations in indeed monotone, up to an exponential factor. The proof of this fact cannot be extended in the present setting for any solution $u$ to (\ref{eq:sublinear-q-eq}), due to the sublinear nature of the problem. 
 
In what follows we need to consider the derivatives of $D$ and of $N$. 
This is the object of the following two statements, which are inspired by the computations in \cite[Section 4]{GarLin86}.

%

\begin{proposition}\label{der D}
For every $r \in (0, \dist(0,\R^N \setminus \Omega))$, we have the identity
\[
D'(r) = \frac{N-2}{r} D(r) - \frac{C_{N,q}}{q r} \int_{B_r}|u|^q\,dx  + \int_{S_r}\left(2 u_\nu^2 + \left(\frac{2-q}{q}\right)|u|^{q}\right) \,d\sigma.
\]
with $C_{N,q}= 2N -(N-2)q>0$. 
\end{proposition}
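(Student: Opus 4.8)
The plan is to derive the identity from a Rellich--Pohozaev relation obtained by testing the equation $-\Delta u = |u|^{q-2}u$ against the radial multiplier $\langle x,\nabla u\rangle$ on $B_r$, and then to eliminate the Dirichlet-type integrals in favour of $D$ and $D'$. As a preliminary I would record two elementary facts. Since $u\in C^1_{\loc}(\Omega)$ in both cases $q\in(1,2)$ and $q=1$, the integrand $|\nabla u|^2-|u|^q$ is continuous, so $r\mapsto D(r)$ is of class $C^1$ and the coarea formula gives
\[
D'(r)=\int_{S_r}\bigl(|\nabla u|^2-|u|^q\bigr)\,d\sigma .
\]
Together with the definition of $D$, this yields the two substitution rules $\int_{B_r}|\nabla u|^2 = D(r)+\int_{B_r}|u|^q$ and $\int_{S_r}|\nabla u|^2 = D'(r)+\int_{S_r}|u|^q$, which will be used at the end.

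Next I would carry out the Pohozaev computation. Integrating $-\Delta u\,\langle x,\nabla u\rangle = |u|^{q-2}u\,\langle x,\nabla u\rangle$ over $B_r$ and integrating by parts once on the left gives $-\int_{B_r}\Delta u\,\langle x,\nabla u\rangle = -\int_{S_r}u_\nu\langle x,\nabla u\rangle + \int_{B_r}\nabla u\cdot\nabla\langle x,\nabla u\rangle$; here I use the pointwise identity $\nabla u\cdot\nabla\langle x,\nabla u\rangle = |\nabla u|^2 + \tfrac12\langle x,\nabla|\nabla u|^2\rangle$, the divergence theorem $\int_{B_r}\langle x,\nabla|\nabla u|^2\rangle = r\int_{S_r}|\nabla u|^2 - N\int_{B_r}|\nabla u|^2$, and $\langle x,\nabla u\rangle = ru_\nu$ on $S_r$. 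On the right-hand side I would write $|u|^{q-2}u\,\langle x,\nabla u\rangle = \tfrac1q\langle x,\nabla|u|^q\rangle$ and integrate by parts analogously. Collecting terms one arrives at
\[
\frac r2\int_{S_r}|\nabla u|^2 \;-\; \Bigl(\frac N2-1\Bigr)\int_{B_r}|\nabla u|^2 \;-\; r\int_{S_r}u_\nu^2 \;=\; \frac rq\int_{S_r}|u|^q \;-\; \frac Nq\int_{B_r}|u|^q .
\]

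Finally I would insert the two substitution rules from the first step into this identity, solve for $D'(r)$, and collect the coefficients of the remaining integrals of $|u|^q$: the bulk term picks up the factor $\frac{N-2}{2}-\frac Nq = -\frac{2N-(N-2)q}{2q}$, which is exactly $-\frac{C_{N,q}}{2q}$, and the boundary term picks up $\frac1q-\frac12 = \frac{2-q}{2q}$; multiplying through by $2/r$ gives precisely the stated formula, while $C_{N,q}=2N-(N-2)q>0$ for $q<2$ is immediate. The only point requiring care — and hence the main obstacle — is the justification of the Pohozaev computation in the borderline case $q=1$, where $u$ is merely in $W^{2,p}_{\loc}(\Omega)\cap C^{1,\alpha}_{\loc}(\Omega)$ rather than $C^2$. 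There I would use that, as already noted, $u$ is a strong solution with a.e.\ symmetric $D^2u\in L^p_{\loc}(\Omega)$, so on the smooth ball $B_r\Subset\Omega$ every integration by parts above is valid for $p$ large (traces of $\nabla u$ on $S_r$ are well defined and all products are integrable), and the chain-rule identities $\nabla|\nabla u|^2 = 2\sum_i\partial_iu\,\nabla\partial_iu$ and $\nabla|u| = \sgn(u)\nabla u$ hold in the Sobolev sense, the latter because $\nabla u=0$ a.e.\ on $\{u=0\}$; alternatively one may mollify $u$, run the $C^\infty$ identity, and pass to the limit using $C^1$-convergence of $u$ together with $L^p$-convergence of $D^2u$. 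Apart from this regularity bookkeeping, the argument is a direct calculation.
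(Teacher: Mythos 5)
Your proof is correct and is essentially the paper's own argument in a reorganized form: testing the equation against $\langle x,\nabla u\rangle$ and then substituting $\int_{B_r}|\nabla u|^2=D(r)+\int_{B_r}|u|^q$, $\int_{S_r}|\nabla u|^2=D'(r)+\int_{S_r}|u|^q$ amounts to the same Rellich--Pohozaev integrations by parts (with the same vector fields $|\nabla u|^2x$, $\langle x,\nabla u\rangle\nabla u$, $|u|x$) that the paper performs when expanding $\int_{S_r}|\nabla u|^2$. Your handling of the borderline case $q=1$ via $W^{2,p}_{\loc}$ regularity, a.e.\ symmetry of $D^2u$ and $\nabla|u|=\sgn(u)\nabla u$ (or mollification) is an adequate substitute for the paper's use of the Gauss--Green formula for continuous vector fields with $L^1$ divergence.
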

\begin{proof}
In the case $q>1$, where $u \in C^2(\Omega)$ and $|u|^q \in C^1(\Omega)$, we have 
\begin{equation}
  \label{eq:D-prime-simple}
D'(r)= \int_{S_r}\Bigl( |\nabla u|^2 - |u|^{q}\Bigr),
\end{equation}
whereas, by the symmetry of the matrix $D^2u$, 
\begin{align*}
\int_{S_r} |\nabla u|^2 &=  \frac{1}{r}\int_{S_r} \langle |\nabla u|^2  x, \nu \rangle = \frac{1}{r} \int_{B_r}\div \Bigl(|\nabla u|^2 x\Bigr) \\
& = \frac{N}{r} \int_{B_r} |\nabla u|^2  + 
\frac{2}{r} \int_{B_r} \langle x,  (D^2 u)\,\nabla u \rangle \\
&= \frac{N}{r} \int_{B_r} |\nabla u|^2  + 
\frac{2}{r} \int_{B_r} \langle (D^2 u)\,x ,   \nabla u \rangle  
\\
&= \frac{N}{r} \int_{B_r} |\nabla u|^2  + 
\frac{2}{r} \int_{B_r} \Bigl \langle \nabla \bigl(\langle \nabla u, x \rangle - u\bigr), \nabla u \Bigr \rangle  
 \\
&= \frac{N-2}{r} \int_{B_r} |\nabla u|^2  - 
\frac{2}{r} \int_{B_r} \langle \nabla u, x \rangle  \Delta u 
 + \frac{2}{r}\int_{S_r} \langle \nabla u,  x \rangle  u_\nu  \\
&= \frac{N-2}{r} \int_{B_r} |\nabla u|^2  + 
\frac{2}{r}  \int_{B_r} \langle \nabla u, x \rangle f_q(u)  
 + 2\int_{S_r} u_\nu^2.
\end{align*}
By integration by parts 
$$
\int_{B_r} \langle \nabla u, x \rangle f_q(u) = \frac{1}{q}  \int_{B_r} \langle \nabla |u|^{q}, x \rangle = \frac{r}{q} \int_{S_r} |u|^{q} -\frac{N}{q} \int_{B_r}|u|^q.
$$
Inserting these identities in (\ref{eq:D-prime-simple}), we find that 
\begin{align*}
D'(r) &= \frac{N-2}{r} \int_{B_r} |\nabla u|^2  - \frac{2N}{qr} \int_{B_r}|u|^q + \int_{S_r}\left(2 u_\nu^2 + \left(\frac{2}{q}-1\right)|u|^{q}\right)\\
&= \frac{N-2}{r} D(r) - \frac{C_{N,q}}{q r} \int_{B_r}|u|^q  + \int_{S_r}\left(2 u_\nu^2 + \left(\frac{2}{q}-1\right)|u|^{q}\right), 
\end{align*}
 as claimed. The same computation is also valid in the case $q=1$ but requires extra justification. First, we use the fact that -- as remarked before -- 
$D^2 u \in L^{p}_{\loc}(\Omega, \R^{N \times N})$ is an a.e. symmetric matrix function. Moreover, as observed in \cite[Proposition 2.7]{hofmann.mitrea.taylor:2010}, the Gauss-Green formula 
\begin{equation}
  \label{eq:gauss-green}
\int_{B_r} \div w = \int_{S_r} \langle w, \nu \rangle
\end{equation}
holds for all vector fields $w \in C(\overline{B_r},\R^N)$ with $\div w \in L^1(B_r)$. In the above computation for the case $q=1$, (\ref{eq:gauss-green}) is applied successively to the vector fields 
$$
w_1=|\nabla u|^2 x,\qquad w_2 = \langle \nabla u, x \rangle \nabla u \qquad \text{and}\qquad  w_3= |u| x. 
$$
Here we note  in particular that 
\[
\div w_3 = \sgn(u) \langle \nabla u,x \rangle +  N |u| \qquad \text{a.e. in $\Omega$.}
 \qedhere
 \]
\end{proof}

We now derive a lower bound for the derivative of the Almgren frequency function $N$. 

\begin{proposition}\label{der N}
With $C_{N,q}$ given in Proposition~\ref{der D}, we have 
$$
N'(r) \ge \frac{1}{H(r)}\left[ \frac{r}{q}\left(2-q\right)  \int_{S_r}|u|^q\,d\sigma-\frac{C_{N,q}}{q} \int_{B_r}|u|^q\,dx\right]  
$$
for every $r \in (0,\dist(0,\R^N \setminus \Omega))$ such that $H(r) \neq 0$.
\end{proposition}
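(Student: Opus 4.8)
The plan is to compute $N'(r)$ directly from the definition $N(r)=\frac{rD(r)}{H(r)}$ by the quotient rule, and then to substitute the two formulas that are already available: the expression for $D'(r)$ from Proposition~\ref{der D}, and the identity $\frac{H'(r)}{H(r)}=\frac{N-1}{r}+\frac{2N(r)}{r}$, which is just a restatement of \eqref{eq:derivative-1}. Since $H$ is continuous, the condition $H(r)\neq 0$ guarantees $H>0$ on a neighbourhood of $r$, so $N$ is differentiable there (using that $D$ is differentiable by Proposition~\ref{der D} and that $H\in C^1$ via $H'(r)=\frac{N-1}{r}H(r)+2D(r)$).

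Carrying out the quotient rule, one writes
\[
N'(r)=\frac{D(r)+rD'(r)}{H(r)}-\frac{rD(r)}{H(r)}\cdot\frac{H'(r)}{H(r)}.
\]
Into the first term I substitute Proposition~\ref{der D}, obtaining
\[
D(r)+rD'(r)=(N-1)D(r)-\frac{C_{N,q}}{q}\int_{B_r}|u|^q+r\int_{S_r}\left(2u_\nu^2+\left(\frac{2-q}{q}\right)|u|^q\right).
\]
Into the second term I substitute $\frac{H'(r)}{H(r)}=\frac{N-1}{r}+\frac{2N(r)}{r}$, and I use $\frac{D(r)}{H(r)}=\frac{N(r)}{r}$. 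The two resulting copies of $\frac{(N-1)N(r)}{r}$ cancel, and one is left with
\[
N'(r)=\frac{2r}{H(r)}\int_{S_r}u_\nu^2-\frac{2N(r)^2}{r}+\frac{1}{H(r)}\left[\frac{r(2-q)}{q}\int_{S_r}|u|^q-\frac{C_{N,q}}{q}\int_{B_r}|u|^q\right].
\]

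It then remains to discard the first two terms on the right-hand side by showing their sum is nonnegative; this is the only non-algebraic step. By the Cauchy--Schwarz inequality on $S_r$,
\[
D(r)^2=\left(\int_{S_r}u u_\nu\right)^2\le H(r)\int_{S_r}u_\nu^2,
\]
so $\frac{2r}{H(r)}\int_{S_r}u_\nu^2\ge\frac{2rD(r)^2}{H(r)^2}=\frac{2N(r)^2}{r}$, and hence $\frac{2r}{H(r)}\int_{S_r}u_\nu^2-\frac{2N(r)^2}{r}\ge 0$. Inserting this into the identity above gives exactly the claimed lower bound. There is no genuine obstacle beyond this bookkeeping; the only points deserving a word of care are the differentiability of $N$ near $r$ (which is why we restrict to $r$ with $H(r)\neq 0$) and, in the case $q=1$, the use of the a.e.\ symmetry of $D^2u$ and of the Gauss--Green justification already carried out in the proof of Proposition~\ref{der D}, since these underlie the formula for $D'(r)$ that we are substituting.
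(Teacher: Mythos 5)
Your proposal is correct and follows essentially the same route as the paper: quotient rule for $N'(r)$, substitution of the formula for $D'(r)$ from Proposition~\ref{der D} together with the identity for $H'(r)$, and the Cauchy--Schwarz inequality on $S_r$ to discard the nonnegative remainder $\int_{S_r}u_\nu^2 - D^2(r)/H(r)$. The only difference is cosmetic bookkeeping in where the cancellation of the $(N-1)D(r)$ terms is carried out.
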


\begin{proof}
We observe that, whenever $H(r) \neq 0$, it results
\begin{align*}
N'(r) &= \frac{1}{H(r)} \left[D(r) + rD'(r)  -r \frac{D(r) H'(r)}{H(r)} \right] \\
& =  \frac{1}{H(r)} \left[(2-N) D(r) + rD'(r)  -2r \frac{D^2(r)}{H(r)} \right].
\end{align*}
Hence, by Proposition \ref{der D} 
$$
N'(r) = \frac{1}{H(r)}\biggl[r \int_{S_r}\left(2 u_\nu^2 + \left(\frac{2-q}{q}\right)|u|^{q}\right)  - \frac{C_{N,q}}{q}\int_{B_r}|u|^q  -  \frac{2 r D^2(r)}{H(r)}\biggr],  
$$
and the thesis follows observing that
$$
\int_{S_r} u_\nu^2  - \frac{D^2(r)}{H(r)}=\int_{S_r} u_\nu^2 
- \frac{\Bigl(\int_{S_r} u u_\nu \Bigr)^2}{\int_{S_r} u^2} \ge 0 
$$
by the Cauchy-Schwarz inequality.
\end{proof}

We may now complete the

\begin{proof}[Proof of Eq. (\ref{eq:claim-weak-unique})]
Without loss of generality, we suppose by contradiction that $\Omega=B_1$, $u \not \equiv 0$ in $B_1$, but $u \equiv 0$ in $B_{t}$ for some small $t$. We denote
\[
d(r):=  \frac{1}{q}\int_{B_r}|u|^q\,dx, \quad \text{so that} \quad d'(r) = \frac{1}{q}\int_{S_r} |u|^q\,d\sigma,
\]
and we set
\[
r_0:= \sup \left\{r \ge 0: d(r) = 0\right\}  \in (0,1).
\]
It is clear that $D(r) = d(r) = 0$ for every $r \in (0,r_0]$, and, since 
$d$ is non-decreasing, that $d(r)>0$ for $r>r_0$. 
 
By Proposition \ref{der D}, 
\[
D'(r) \ge \left(\frac{N-2}r  \right) D(r) - \frac{C_{N,q}}r d(r) + (2-q) d'(r),
\]
whence 
\[
\frac{d}{dr}\left(\frac{D(r)}{r^{N-2}}\right) \ge \frac{2-q}{r^{N-2}} d'(r) - \frac{C_{N,q}}{r^{N-1}} d(r)
\]
for every $r \in (0,1)$. Thus, for $r \in (r_0, 1)$
\[
\frac{d}{dr} \left(\frac{D(r)}{r^{N-2}}\right) \ge (2-q) d'(r) - C_1 d(r) \qquad \text{with}\quad C_1:= \frac{C_{N,q}}{r_0^{N-1}}.
\]
Integrating in $(r_0,r)$ with $r \in (r_0,1)$, we obtain
\begin{align*}
\frac{D(r)}{r^{N-2}} &\ge \frac{D(r_0)}{r_0^{N-2}} + (2-q) (d(r)-d(r_0)) - C_1 \int_{r_0}^r d(s)\,ds  \\
& \ge (2-q) d(r) - C_1(r-r_0)d(r),
\end{align*}
where we used the fact that $D(r_0)=d(r_0)=0$ and the monotonicity of $d$. Fixing 
$r_1 \in (r_0,1)$ such that $C_1(r_1-r_0) < \frac{2-q}{2}$, we thus conclude that 
$$
\frac{D(r)}{r^{N-2}} \ge \frac{2-q}{2} d(r) \qquad \text{for $r \in (r_0,r_1)$},
$$
and hence 
\begin{equation}
\label{eq: lower bound D}
D(r) \ge  C_2 d(r)>0 \qquad  \text{for $r \in (r_0,r_1)$, with $C_2:= \frac{2-q}{2} r_0^{N-2}$.} 
\end{equation}
Next we note that, since $u \not \equiv 0$ in $B_{r_1}$, there must exist
$r_2 \in (r_0,r_1)$ such that $H(r_2) \neq 0$. We fix such a radius $r_2$ and 
define 
\[
r_3:= \inf \left\{r \in (0,r_2): H(s)>0 \ \text{for every $s \in (r, r_2)$}\right\}.
\]
Then we have $r_3 \ge r_0$, $H(r_3) = 0$ and $H(r) \not = 0$ for $r \in (r_3,r_2)$. Hence the the Almgren frequency $N(r)$ is well defined for $r \in (r_3,r_2)$, and we can estimate its derivative with the help of Proposition \ref{der N}, which gives that 
\[
N'(r) \ge - C_{N,q} \frac{d(r)}{H(r)} \qquad \text{for $r \in (r_3,r_2].$}
\]
Combining this with \eqref{eq: lower bound D}, we obtain
\[
\frac{N'(r)}{N(r)} \ge -C_{N,q} \frac{d(r)}{r D(r)}  \ge -\frac{C_{N,q}}{r_0 C_2} =: -C_3 \qquad \text{for $r \in (r_3,r_2]$.}
\]
To sum up, we proved that 
\[
N'(r) \ge -C_3 N(r) \qquad \text{for $r \in (r_3,r_2],$}
\]
and by integrating we deduce that
\[
r \mapsto N(r) e^{C_3 r} \quad \text{is non-decreasing in $(r_3,r_2]$}.
\]
In particular, 
\[
N(r) \le N(r_2) e^{C_3 r_2}=: C_4  \qquad \text{for $r \in (r_3,r_2]$}. 
\]
By \eqref{eq:derivative-1}, this implies that 
\[
\frac{d}{dr} \log \left( \frac{H(r)}{r^{N-1}}\right) \le \frac{2C_4}{r} \le 
\frac{2C_4}{r_0} \qquad \text{for $r \in (r_3,r_2]$,}
\]
whereas on the other hand we have that 
$$
\lim_{r \to r_3^+} \log \left( \frac{H(r)}{r^{N-1}}\right)= -\infty
$$
since $H(r_3)=0$ and $r_3>0$. This is a contradiction, and hence (\ref{eq:claim-weak-unique}) is proved. We thus have finished the proof of Theorem~\ref{thm: weak unique}.
\end{proof}

\section{The general case}
\label{sec:general-case}

This section is devoted to the proof of Theorem \ref{thm: main 2}. Hence, in the following we assume that the hypotheses (A1)--(A3) are satisfied with some constants $C,\eps_0>0$ and $q<2$. First, we note that it easily follows from (A3)i) that, for every $x \in \Omega$, the function $s \mapsto F(x,s)/|s|^q$ is non-increasing on $(0,\eps_0)$, and it is non-decreasing on $(-\eps_0,0)$. Combining these facts with (A3)iv), we infer that
\begin{equation}\label{rem: on S}
F(x,s) \ge \frac{\min\{F(x,\eps_0),F(x,-\eps_0)\}}{\eps_0^q} |s|^q \ge \frac{\kappa_2}{\eps_0^q} |s|^q \qquad \text{for $x \in \Omega$, $0<|s|<\eps_0$}.
\end{equation}
Second, we discuss the regularity of a weak solution $u \in H^1_{\loc}(\Omega) \cap L^\infty_{\loc}(\Omega)$ of (\ref{div eq}). Since $f \in L^\infty_{\loc}(\Omega \times \R)$, it follows that $f(\cdot,u(\cdot)) \in L^\infty_{\loc}(\Omega)$. Thus standard elliptic regularity applies and yields, as in the previous section, that $u \in W^{2,p}_{\loc}(\Omega)$ for all $p<\infty$, and that $u$ is a strong solution of (\ref{div eq}).  Moreover, $u \in C^{1,\alpha}_{\loc}(\Omega)$ for all $\alpha \in (0,1)$, and $D^2 u \in L^{p}_{\loc}(\Omega, \R^{N \times N})$ is an a.e. symmetric matrix in $\Omega$. 

We claim that it suffices to prove the following simplified version of Theorem~\ref{thm: main 2}.

\begin{theorem}
\label{main-2simplified}
Suppose that (A1)--(A3) are satisfied, and let $u \in H^1_{\loc}(\Omega) \cap L^\infty_{\loc}(\Omega)$ be a weak solution of (\ref{div eq}). Moreover, assume in addition that 
\begin{equation}
  \label{eq:extra-assumption}
\text{$0 \in \Omega$, $\quad A(0)= \id,\quad$ and $\quad |u|< \eps_0\; $ in $\Omega$.} 
\end{equation}
If $u \equiv 0$ in a neigborhood of $0$, then $u \equiv 0$ on $B_{\delta_0}(0)$, where $\delta_0:= \dist(0,\R^N \setminus \Omega)$.
\end{theorem}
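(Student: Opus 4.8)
The plan is to mimic the frequency-function strategy of Section~\ref{sec: proof simple case}, but now with the variable-coefficient operator $L u := -\div(A(x)\nabla u)$ in place of $-\Delta$ and with the term $V(x)u + f(x,u)$ in place of the homogeneous nonlinearity $f_q(u)$. Following \cite{GarLin86, GaSVG}, I would first replace the Euclidean geometry by the geometry adapted to $A$: set $\mu(x):=\langle A(x)x,x\rangle/|x|^2$, define the weighted height
\[
H(r):=\int_{S_r}\mu\, u^2\, \frac{d\sigma}{|x|},
\]
and the corresponding energy
\[
D(r):=\int_{B_r}\langle A\nabla u,\nabla u\rangle\,dx - \int_{B_r}\bigl(Vu^2 + f(x,u)u\bigr)\,dx,
\]
with $N(r):=rD(r)/H(r)$ the associated Almgren frequency. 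The normalization $A(0)=\id$ in \eqref{eq:extra-assumption} guarantees that, after possibly shrinking $\delta_0$, the vector field $x\mapsto A(x)x/|x|$ is transverse to every sphere $S_r$, so the coarea/Gauss–Green manipulations go through; the local Lipschitz continuity of $A$ in (A1) produces only lower-order error terms of size $O(r)$ relative to the main terms, exactly as in \cite[Section~4]{GarLin86}.

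Next I would reprove the analogues of Propositions~\ref{der D} and \ref{der N} in this setting. Differentiating $D$ and using the Rellich–Ne\v{c}as (Pohozaev-type) identity for $L$, the leading terms reproduce $\frac{N-2}{r}D(r)$ plus $2\int_{S_r}\mu\, u_\nu^2\,\frac{d\sigma}{|x|}$ (now $u_\nu$ being the $A$-conormal derivative), while the nonlinear contribution is handled using $F$ rather than an explicit power: the bound $f(x,s)s\le qF(x,s)$ from (A3)i) plays the role of the homogeneity relation $f_q(u)u = q\cdot\frac{|u|^q}{q}$, and assumption (A3)iii), $|\nabla_1 F(x,s)|\le\kappa_1 F(x,s)$, controls the term that arises when the $x$-derivative of $F$ falls on the radial vector field (this is the substitute for the integration by parts $\int_{B_r}\langle\nabla u,x\rangle f_q(u) = \frac1q\int_{B_r}\langle\nabla|u|^q,x\rangle$). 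The superlinear contributions from $V$ are $L^\infty_{\loc}$ and hence enter only as controlled $O(1)$ perturbations. The upshot should be an inequality of the form
\[
D'(r)\ge \frac{N-2}{r}D(r) + (2-q)\,d'(r) - C\Bigl(\frac{1}{r}d(r) + \int_{B_r}u^2 + \int_{S_r}\mu\,u^2\tfrac{d\sigma}{|x|}\Bigr),
\]
where now $d(r):=\int_{B_r}F(x,u)\,dx$; the key positive gain $(2-q)d'(r)$ survives, thanks to \eqref{rem: on S}, which says $F(x,u)$ is comparable to a genuine subquadratic power near $u=0$ and in particular $d(r)>0$ as soon as $u\not\equiv0$ on $B_r$. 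From here a Cauchy–Schwarz argument as in Proposition~\ref{der N} yields $N'(r)\ge -C\,d(r)/H(r) - C$ on intervals where $H>0$.

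Finally I would run the contradiction argument verbatim from the ``Proof of Eq.~\eqref{eq:claim-weak-unique}''. Assuming $u\equiv0$ near $0$ but $u\not\equiv0$ on $B_{\delta_0}$, set $r_0:=\sup\{r:\ d(r)=0\}\in(0,\delta_0)$; then $D(r)=d(r)=0$ on $(0,r_0]$, and integrating the $D'$-inequality from $r_0$ gives $D(r)\ge C_2 d(r)>0$ on a small interval $(r_0,r_1)$, using $d(r_0)=0$, monotonicity of $d$, and the $(2-q)d'$ gain. Choosing $r_2\in(r_0,r_1)$ with $H(r_2)\ne0$ and $r_3:=\inf\{r<r_2:\ H>0\text{ on }(r,r_2)\}\ge r_0$, the bound $N'\ge -C_3 N$ on $(r_3,r_2]$ forces $N(r)\le C_4$ there, hence $\frac{d}{dr}\log(H(r)/r^{N-1})\le 2C_4/r_0$, contradicting $\log(H(r)/r^{N-1})\to-\infty$ as $r\to r_3^+$ (which holds since $H(r_3)=0$, $r_3>0$). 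Hence $u\equiv0$ on $B_{\delta_0}(0)$.

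The main obstacle I expect is the careful bookkeeping of error terms in the variable-coefficient Rellich–Ne\v{c}as identity: one must verify that every term produced by the non-constancy of $A$ (and by the conormal versus Euclidean normal derivative) is genuinely lower order, so that the coefficient of $d'(r)$ stays strictly positive ($\ge (2-q)/2$, say) after shrinking $\delta_0$, and that the $V$-term and the $\nabla_1 F$-term are absorbed into the constant $C$ rather than spoiling the sign of the gain. The reductions recorded in \eqref{eq:extra-assumption} and the structural hypotheses (A3)ii)--iv) are exactly what make this absorption possible; with those in hand the scheme closes as above.
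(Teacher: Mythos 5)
Your overall scheme is the same as the paper's: the weighted height $H(r)=\int_{S_r}\mu u^2$ with $\mu=\langle Ax,x\rangle/|x|^2$, the energy $D(r)=\int_{B_r}\langle A\nabla u,\nabla u\rangle-\int_{B_r}(Vu^2+f(x,u)u)$, a Rellich--Pohozaev identity along the vector field $Z=A(x)x/\mu(x)$ with $O(r)$ errors from the Lipschitz coefficients (as in \cite{GarLin86,GaSVG}), the quantity $d(r)=\int_{B_r}F(x,u)$ with (A3)i) and iii) replacing homogeneity, and the same $r_0,r_1,r_2,r_3$ contradiction argument ending with the boundedness of $N$ versus $\log(H(r)/r^{N-1})\to-\infty$ at $r_3$. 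Up to harmless bookkeeping (your extra $1/|x|$ weight in $H$, and the need to pass between $D$ and $D_1$ via $|D-D_1|\le O(1)d(r)$), this is the paper's proof.

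There is, however, one genuine gap: the mechanism you invoke to keep the coefficient of $d'(r)$ strictly positive. You attribute the danger to the variable-coefficient error terms and propose to neutralize it ``after shrinking $\delta_0$''. This fails on two counts. First, the statement (and its use in deducing Theorem~\ref{thm: main 2}) requires vanishing on the \emph{full} ball $B_{\delta_0}(0)$, so you may not shrink $\delta_0$; and in the contradiction argument the relevant radii $r_0<r<r_1$ are in no way small, so no smallness in $r$ is available there. Second, the terms that actually compete with the gain $(2-q)d'(r)$ are not produced by the non-constancy of $A$ (those multiply $D_1$ or $N$ and are tamed by exponential integrating factors); they are the zeroth-order terms $\int_{S_r}Vu^2$ and $\int_{B_r}Vu^2$, which are present even when $A\equiv\id$, and whose size relative to $d'(r)$, $d(r)$ is governed by \eqref{rem: on S}: one gets $\int_{S_r}u^2\le O(1)\,\|u\|_{L^\infty(S_r)}^{2-q}\,d'(r)$, with a constant involving $\|V\|_\infty$, $\eps_0^q/\kappa_2$ that does not become small on small balls. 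The paper closes this step by using the continuity of $u$ together with $u\equiv 0$ on $B_{r_0}$: since $\|u\|_{L^\infty(B_r)}^{2-q}\to 0$ as $r\to r_0^+$, one can choose $r_1$ close to $r_0$ so that the defect is below $\tfrac{2-q}{2}$, cf.\ \eqref{eq:u^2-S-r} and \eqref{eq:u-continuity-est}. This local smallness at the vanishing radius $r_0$ is the missing ingredient in your write-up; once it is inserted (both in the integrated lower bound $D(r)\ge C_3 d(r)$ on $(r_0,r_1)$ and in the estimate $N'\ge -C(N+1)$ on $(r_3,r_2]$), the rest of your argument closes exactly as in the paper.
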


Assuming for the moment that Theorem~\ref{main-2simplified} holds true,  we can quickly complete the

\begin{proof}[Proof of Theorem \ref{thm: main 2}]
By assumption, the open set 
$$
U:= \{x \in \Omega\::\: \text{$u \equiv 0$ in a neighborhood of $x$}\}, 
$$
is nonempty. Since $\Omega$ is connected, the claim 
$u \equiv 0$ in $\Omega$  follows once we have shown that $\partial U \cap \Omega = \varnothing$. 
We suppose by contradiction that there exists a point $x_* \in \partial U \cap \Omega$. By the continuity of $u$, we have $u(x_*)= 0$, and thus $x_*$ is contained in the open set 
\[
\Omega_0 := \{x \in \Omega\::\: |u(x)| < \eps_0\}.
\]
Using the continuity and positivity of the function $x \mapsto \lambda(x)$ considered in assumption (A1), we may then choose $x_0 \in U$ with 
$$
|x_*-x_0|<\frac{\lambda_* \delta_*}{2},\qquad \quad \text{where 
$\quad \delta_* := \dist(x_0, \R^N \setminus \Omega_0)\quad$ and $\quad \lambda_*= \lambda(x_0) \in (0,1)$.}
$$ 
Our aim is to show that 
\begin{equation}
  \label{eq:u-equiv-00}
u \equiv 0 \qquad \text{in $B_{\lambda_* \delta_*}(x_0)$,}  
\end{equation}
since then we have $x_* \in U \cap \partial U$, contradicting the openness of $U$ in $\Omega$. We now change coordinates via the affine map 
$$
T : \R^N \to \R^N, \qquad T(x)= A(x_0)^{1/2} x + x_0
$$
We note that, as a consequence of assumption (A1), 
\begin{equation*}
\sqrt{\lambda_*}|x-y| \le |T(x)-T(y)| \le \frac{1}{\sqrt{\lambda_*}} |x-y| \qquad \text{for $x,y \in \R^N$}
\end{equation*}
Let $\widetilde \Omega:= T^{-1}(\Omega_0)$. Then we have $0 \in \widetilde \Omega$, $T(0)=x_0$ and 
$$
T^{-1}(B_{\lambda_* \delta_*}(x_0)) \subset  B_{\sqrt{\lambda_*} \delta_*}(0) \subset \widetilde \Omega
$$
Hence, to show (\ref{eq:u-equiv-00}), it suffices to show that 
\begin{equation}
  \label{eq:u-equiv-0}
\tilde u \equiv 0 \qquad \text{on $B_{\sqrt{\lambda_*} \delta_*}(0)$,}  
\end{equation}
for the function 
$$
\tilde u \in H^1_{\loc}(\widetilde \Omega),\qquad 
 \tilde u(x):= u(T(x)).
$$
The function $\tilde u$ is a weak solution of 
\begin{equation*}
-\div(\widetilde A(x) \nabla \tilde u) = \widetilde V(x)\tilde u + \tilde f(x,\tilde u) \qquad \text{in $\widetilde \Omega$},
\end{equation*}
with $\widetilde V(x)= V(T(x))$, $\tilde f(x,s):= f(T(x), s)$ and 
$$
\widetilde A(x):= A(x_0)^{1/2} A^{-1}(T(x)) A(x_0)^{1/2}.
$$ 
We note that -- on $\widetilde \Omega$ -- the matrix-valued function 
$\widetilde A(x)$ still satisfies (A1) once $\lambda$ is replaced by the function 
$x \mapsto \tilde \lambda(x):=\lambda_* \lambda(T(x))$. Moreover, the function $\widetilde V$ satisfies (A2), and the function $\tilde f \in L^\infty_{\loc}(\widetilde \Omega \times \R)$ satisfies assumption (A3) with unchanged values of $\eps_0$ and 
$\kappa_2$ after making $\kappa_1>0$ smaller if necessary. 

By construction, all assumptions of Theorem~\ref{main-2simplified} are satisfied with $u$, $A$, $V$, $\Omega$ replaced $\tilde u$, $\tilde A$, $\tilde V$, $\tilde \Omega$.  Since $\sqrt{\lambda_*} \delta_* \le \dist(0, \R^N \setminus \tilde \Omega)$, Theorem~\ref{main-2simplified} yields (\ref{eq:u-equiv-0}), as required.
\end{proof} 

It thus remains to prove Theorem~\ref{main-2simplified}, and the rest of this section will be devoted to this aim. From now on, we fix a weak solution $u \in H^1_{\loc}(\Omega) \cap L^\infty_{\loc}(\Omega)$ of (\ref{div eq}), and we assume that (\ref{eq:extra-assumption}) is satisfied in addition to (A1)--(A3). We fix an arbitrary $\delta_1 \in (0,\delta_0)$. It then clearly suffices to show the implication 
\begin{equation}
  \label{eq:-r-*-eq}
u \equiv 0 \quad \text{in a neighborhood of $0$}\qquad \Lra \qquad 
u \equiv 0 \quad \text{on $B_{\delta_1}(0)$.}  
\end{equation}
We note that, by assumptions (A1)-(A3) and since $u \in L^\infty(B_{\delta_1}(0))$, the functions $V, \lambda$, $\lambda^{-1}$ and $x \mapsto f(x,u(x))$, $x \mapsto F(x,u(x))$ are all bounded in $B_{\delta_1}(0)$. We aim at adapting the strategy used to deal with the simple equation \eqref{eq:sublinear-q-eq}, and to this purpose we introduce analogues of the functions $H$, $D$, and $N$ defined in Section~\ref{sec: proof simple case}. As before we put $B_r:= B_r(0)$, $S_r:=\partial B_r$ for $r \in (0,\delta_0)$, and, following \cite{GaSVG,Kuk}, we define
\begin{equation*}
\begin{split}
&H(r) :=\int_{S_r} u^2 \mu\, d \sigma,  \quad \text{where}  \quad \mu(x):= \frac{\langle A(x) x, x \rangle}{|x|^2} \\
&D_1(r)  :=  \int_{B_r} \! \langle A(x)\nabla u, \nabla u \rangle\,dx, \quad D(r)  :=  D_1(r)- \int_{B_r} \!\bigl(V(x)u^2 + f(x,u) u\bigr)dx,\\
&\text{and}\quad N(r)  := \frac{r D(r)}{H(r)} \qquad \text{(defined whenever $H(r) \neq 0$)}. 
\end{split}
\end{equation*}
Notice that by the divergence theorem 
\begin{equation}\label{D =}
D(r) = \int_{S_r} u \langle A(x) \nabla u, \nu \rangle\,d\sigma.
\end{equation}
In what follows the dependence of the matrix $A$ with respect to $x$ will often be omitted, for the sake of brevity. For the same reason, we will often omit $dx$ and $d \sigma$ in the integrals. We also need the following notation.

\medskip

\begin{definition}\label{rmk: on uniformity}
For $\gamma \in \R$, the symbol $O(r^\gamma)$ always stands for a function on $(0,\delta_1)$ satisfying $|O(r^\gamma)| \le C r^\gamma$ for $r \in (0,\delta_1)$ with a constant $C>0$.
\end{definition}

Following closely some computations performed in a different setting in \cite{GaSVG,Kuk}, we may now derive asymptotic estimates for the derivatives of $H$ and of $D_1$.

\begin{lemma}\label{lem: der H}
We have that 
\[
H'(r)= 2 D(r) + \left( \frac{N-1}r +O(1) \right) H(r)
\]
for any $r \in (0,\delta_1)$.
\end{lemma}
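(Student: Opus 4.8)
The plan is to compute $H'(r)$ directly by differentiating the surface integral $H(r)=\int_{S_r}u^2\mu\,d\sigma$, reducing everything to a volume integral via the divergence theorem, and then controlling the lower-order terms that arise from the variable coefficients. First I would rescale $S_r$ to the unit sphere, writing $H(r)=r^{N-1}\int_{S_1}u^2(r\omega)\mu(r\omega)\,d\sigma(\omega)$, differentiate under the integral sign (justified since $u\in C^{1,\alpha}_{\loc}$ and $\mu$ is Lipschitz away from $0$), and convert back. This produces
\[
H'(r)=\frac{N-1}{r}H(r)+\int_{S_r}\partial_\nu\bigl(u^2\mu\bigr)\,d\sigma
=\frac{N-1}{r}H(r)+\int_{S_r}\bigl(2uu_\nu\mu+u^2\mu_\nu\bigr)\,d\sigma.
\]
Since $\mu(x)=\langle A(x)x,x\rangle/|x|^2$ with $A$ locally Lipschitz and $A(0)=\id$, one checks $\mu(x)=1+O(|x|)$ and $|\nabla\mu(x)|=O(1)$ on $B_{\delta_1}$; hence $\int_{S_r}u^2\mu_\nu\,d\sigma=O(1)\int_{S_r}u^2\,d\sigma=O(1)H(r)$ (using $\mu\ge\text{const}>0$ to pass between $\int_{S_r}u^2\mu$ and $\int_{S_r}u^2$), which accounts for the $O(1)H(r)$ term.

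It then remains to show $\int_{S_r}2uu_\nu\mu\,d\sigma=2D(r)+O(1)H(r)$. Here the natural quantity is not $uu_\nu$ but $u\langle A\nabla u,\nu\rangle$, for which \eqref{D =} gives exactly $D(r)=\int_{S_r}u\langle A\nabla u,\nu\rangle\,d\sigma$. So I would write
\[
\int_{S_r}2uu_\nu\mu\,d\sigma=2\int_{S_r}u\langle A\nabla u,\nu\rangle\,d\sigma+2\int_{S_r}u\bigl(u_\nu\mu-\langle A\nabla u,\nu\rangle\bigr)\,d\sigma,
\]
and the task becomes estimating the discrepancy term. Using $\mu=\langle Ax,x\rangle/|x|^2$ and $\nu=x/|x|$, a direct algebraic manipulation shows that at a point $x\in S_r$,
\[
u_\nu(x)\mu(x)-\langle A(x)\nabla u(x),\nu(x)\rangle
=\Bigl\langle\nabla u(x),\;\frac{\langle A(x)x,x\rangle}{|x|^3}x-\frac{A(x)x}{|x|}\Bigr\rangle,
\]
and since $A(x)=\id+O(|x|)$, the bracketed vector field is $O(|x|)=O(r)$. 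Therefore the discrepancy term is bounded by $O(r)\int_{S_r}|u||\nabla u|\,d\sigma$, which by Cauchy--Schwarz is $O(r)\bigl(\int_{S_r}u^2\bigr)^{1/2}\bigl(\int_{S_r}|\nabla u|^2\bigr)^{1/2}$. To close the argument I would invoke a Caccioppoli-type estimate for $u$ on the annulus $B_r\setminus B_{r/2}$ — available from the equation \eqref{div eq} together with $u\in L^\infty$, (A1)--(A3) and interior $C^{1,\alpha}$ regularity — giving $\int_{S_r}|\nabla u|^2\,d\sigma=O(r^{-1})\int_{S_{r'}}u^2\,d\sigma$ for nearby $r'$, or more directly bound $r\int_{S_r}|\nabla u|^2$ in terms of $H$ using the doubling/regularity machinery of \cite{GaSVG,Kuk}; either way one obtains that the discrepancy term is $O(1)H(r)$ after absorbing.

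The main obstacle I expect is the last step: controlling $r\int_{S_r}|\nabla u|^2\,d\sigma$ (or $\int_{S_r}|u||\nabla u|$) cleanly by $H(r)$ with a uniform constant over $r\in(0,\delta_1)$, since a naive Cauchy--Schwarz leaves a gradient term on $S_r$ that is not obviously of lower order. The resolution is that $\langle A\nabla u,\nu\rangle$ on $S_r$ is already bundled into $D(r)$, and the honest statement of the lemma only asks for $O(1)H(r)$ error, not for an exact identity; so it is enough to bound the \emph{difference} $u_\nu\mu-\langle A\nabla u,\nu\rangle$ pointwise by $O(r)|\nabla u|$ and then use the fact — standard in this circle of ideas and following the computations in \cite{GaSVG,Kuk} — that on scales $r<\delta_1$ one has $r\int_{S_r}|\nabla u|^2\le C\,r^{-1}\int_{S_r}u^2$ up to lower-order corrections coming from the right-hand side, which are themselves $O(1)H(r)$ because $V$ and $x\mapsto f(x,u(x))$ are bounded on $B_{\delta_1}$. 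Collecting the three contributions — the geometric $\frac{N-1}{r}H(r)$, the twice-the-flux $2D(r)$, and the $O(1)H(r)$ from $\mu_\nu$ and from the $A$-vs-$\id$ discrepancy — yields the claimed formula.
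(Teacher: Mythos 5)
Your computation of $H'(r)$ by scaling, and the identification $\int_{S_r}2uu_\nu\mu = 2D(r) + 2\int_{S_r}u\,\langle \nabla u,\ \mu\nu - A\nu\rangle$ with $|\mu\nu - A\nu| = O(r)$, are fine; the gap is the last step, which you yourself flag and then do not actually close. After Cauchy--Schwarz you need, uniformly for $r\in(0,\delta_1)$, something like $r\int_{S_r}|\nabla u|^2\,d\sigma \le C\,r^{-1}\int_{S_r}u^2\,d\sigma$ (or directly $\int_{S_r}|u||\nabla u|\,d\sigma = O(r^{-1})H(r)$). A Caccioppoli inequality cannot give this: it bounds \emph{volume} integrals of $|\nabla u|^2$ by volume integrals of $u^2$ over larger balls, not a surface integral on a fixed sphere by $u^2$ on the \emph{same} sphere. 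Sphere-wise the bound is simply false in general: $H(r)$ can be arbitrarily small, or zero, on spheres where $\nabla u$ is not (this is exactly the degenerate situation exploited later in the proof, where $H(r_3)=0$), so no uniform constant exists. Appealing to ``the doubling/regularity machinery of \cite{GaSVG,Kuk}'' is also not admissible here: those bounds rest on the almost-monotonicity of the Almgren frequency, which the paper stresses is lost for the sublinear equation and which this very lemma is an ingredient for establishing control of $H$ --- so the argument would be circular.

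The paper sidesteps the discrepancy entirely by a different decomposition: it writes $H(r)=\int_{S_r}u^2\langle A\nabla|x|,\nu\rangle = \int_{B_r}\div(u^2A\nabla|x|)$ and differentiates the \emph{volume} integral, so the boundary term produced is exactly $2\int_{S_r}u\langle A\nabla u,\nu\rangle = 2D(r)$ by \eqref{D =}, with no error term involving $\nabla u$ on $S_r$; the remaining term $\int_{S_r}u^2\,\div(A\nabla|x|)$ is then handled by the pointwise expansion $\div(A\nabla|x|)=\frac{N-1}{r}+O(1)$ of \cite[Lemma 4.1]{GaSVG} together with $\mu = 1+O(r)$ and $\mu\ge\lambda>0$. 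If you want to keep your decomposition, the fix is not an $L^2$ gradient bound but the observation that $w:=\mu\nu - A\nu$ satisfies $\langle w,\nu\rangle = \mu-\mu = 0$, i.e.\ $w$ is \emph{tangential} to $S_r$ with bounded derivatives; integrating by parts on the sphere gives $2\int_{S_r}u\langle\nabla u, w\rangle = \int_{S_r}\langle\nabla_{S_r}(u^2),w\rangle = -\int_{S_r}u^2\,\div_{S_r}w = O(1)\int_{S_r}u^2 = O(1)H(r)$, which closes the argument without any control of $\nabla u$ on spheres. As written, however, the proposal does not prove the lemma.
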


\begin{proof}
On $S_r$ we have $\nu= x/|x| = \nabla |x|$, and hence $\langle A \nabla |x|,\nu \rangle = \mu$. As a consequence, using also the symmetry of $A$, we obtain
\begin{align*}
H(r) & = \int_{S_r} u^2 \langle A \nabla |x|,\nu \rangle =\int_{B_r}  \div( u^2 A \nabla |x|)  \\\
&= 2\int_{B_r} u \langle A  \nabla u, \nabla |x| \rangle + \int_{B_r}  u^2 \div(A \nabla |x|).
\end{align*}
By \eqref{D =}, we infer that
\begin{equation}
  \label{eq:preliminary-lem-der-H}
H'(r) = 2 D(r) + \int_{S_r} u^2 \, \div(A \nabla |x|).
\end{equation}
Moreover, by \cite[Lemma 4.1]{GaSVG}
\begin{equation}  \label{eq:div-expansion}
\div(A \nabla |x|) = \frac{N-1}{r} + O(1) \qquad \text{for $r \in (0,\delta_1)$,}
\end{equation}
and by definition $\lambda \le \mu \le \lambda^{-1}$. Inserting these expansions in (\ref{eq:preliminary-lem-der-H}), we obtain the claim.
\end{proof}

As in \cite{GaSVG}, it is now convenient to introduce  the quantity
\begin{equation*}
Z(x):= \frac{A(x)x}{|\mu(x)|}= \frac{|x| A(x) \nabla |x|}{\mu(x)},
\end{equation*}
observing that $\langle Z,\nu \rangle = r$ on $S_r$. Also, for future convenience, we note that 
\begin{equation}\label{div Z}
\div Z = N + O(r) \qquad \text{for $r \in (0,\delta_1)$}.
\end{equation}
For the proof, we refer to \cite[Lemma A.5]{GaSVG}.

\begin{proposition}
 \label{sec:general-case-der-D-1}
For $r \in (0,\delta_1)$, we have that 
\begin{align}
D_1'(r) = \left(\frac{N-2}r + O(1)\right)D_1(r)  
&+ 2 \int_{S_r}  \frac{\langle A \nabla u,\nu\rangle^2}{\mu} \nonumber\\
&+ \frac2r\int_{B_r} \bigl(V(x)u+ f(x,u)\bigr) \langle Z, \nabla u \rangle. \label{D1-prime-formula}
\end{align}
\end{proposition}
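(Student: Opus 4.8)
The plan is to compute $D_1'(r)$ by first writing $D_1(r) = \int_{B_r}\langle A\nabla u,\nabla u\rangle$, so that $D_1'(r) = \int_{S_r}\langle A\nabla u,\nabla u\rangle\,d\sigma$. To rewrite the surface integral, I would use the vector field $\langle A\nabla u,\nabla u\rangle Z$ and the identity $\langle Z,\nu\rangle = r$ on $S_r$, giving
\[
\int_{S_r}\langle A\nabla u,\nabla u\rangle\,d\sigma = \frac1r\int_{S_r}\langle A\nabla u,\nabla u\rangle\,\langle Z,\nu\rangle\,d\sigma = \frac1r\int_{B_r}\div\bigl(\langle A\nabla u,\nabla u\rangle Z\bigr).
\]
Expanding the divergence produces three types of terms: the term $\langle A\nabla u,\nabla u\rangle\,\div Z$, which by \eqref{div Z} equals $\bigl(N+O(r)\bigr)\langle A\nabla u,\nabla u\rangle$ and thus contributes $\bigl(\tfrac{N}{r}+O(1)\bigr)D_1(r)$; a term involving the derivatives of the coefficients of $A$ along $Z$, namely $\langle (DA\cdot Z)\nabla u,\nabla u\rangle$, which by (A1) (locally Lipschitz $a_{ij}$) and $|Z|=O(r)$ is of order $O(r)\langle A\nabla u,\nabla u\rangle$ pointwise and hence contributes $O(1)D_1(r)$; and the genuinely important term $2\langle A\,(D^2u)\,Z,\nabla u\rangle$ coming from differentiating $\nabla u$.

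The heart of the computation is then to handle $\frac2r\int_{B_r}\langle A\,(D^2u)\,Z,\nabla u\rangle$. Here I would follow the model-problem strategy: using the symmetry of $A$ and of $D^2u$ (recall $u\in W^{2,p}_{\loc}$ with a.e.\ symmetric Hessian) and the product rule, rewrite $A(D^2u)Z$ in terms of $\nabla\langle Z,\nabla u\rangle$ minus the term $\langle (DZ)^T A\nabla u,\nabla u\rangle$-type corrections, i.e.\ expand $\langle A\,(D^2u)\,Z,\nabla u\rangle = \langle (D^2u)\,Z, A\nabla u\rangle$ and relate it to $\langle \nabla\langle Z,\nabla u\rangle - (DZ)^T\nabla u,\; A\nabla u\rangle$. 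The $(DZ)^T$ correction terms are again $O(1)$ multiples of $D_1(r)$ by \eqref{div Z} and (A1), absorbed into the $\bigl(\tfrac{N-2}r+O(1)\bigr)D_1(r)$ coefficient (this is where the shift from $N$ to $N-2$ arises, exactly as in Proposition~\ref{der D}, together with the extra $O(1)$ slack). For the principal term $\frac2r\int_{B_r}\langle \nabla\langle Z,\nabla u\rangle, A\nabla u\rangle$ I would integrate by parts: the boundary contribution is $\frac2r\int_{S_r}\langle Z,\nabla u\rangle\langle A\nabla u,\nu\rangle\,d\sigma$, and since on $S_r$ one has $\langle Z,\nu\rangle = r$ and $A\nabla u$ decomposed along $\nu$ gives $\langle Z,\nabla u\rangle = \langle Z,\nu\rangle u_\nu + \cdots$; more directly, using $Z = |x|A\nabla|x|/\mu$ one checks the boundary term equals $2\int_{S_r}\frac{\langle A\nabla u,\nu\rangle^2}{\mu}\,d\sigma$, which is the second term in \eqref{D1-prime-formula}. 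The interior contribution of the integration by parts is $-\frac2r\int_{B_r}\langle Z,\nabla u\rangle\,\div(A\nabla u) = \frac2r\int_{B_r}\langle Z,\nabla u\rangle\bigl(V(x)u + f(x,u)\bigr)$, using that $u$ is a strong solution of \eqref{div eq}; this is exactly the third term.

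Collecting all pieces — the $\bigl(\tfrac{N}r + O(1)\bigr)D_1(r)$ from $\div Z$, the $-\tfrac2r D_1(r)$-type and $O(1)D_1(r)$ corrections from the $(DZ)$ and $DA$ terms (net effect $\bigl(\tfrac{N-2}r+O(1)\bigr)D_1(r)$), the boundary term $2\int_{S_r}\frac{\langle A\nabla u,\nu\rangle^2}{\mu}$, and the interior term $\frac2r\int_{B_r}(Vu+f(x,u))\langle Z,\nabla u\rangle$ — yields \eqref{D1-prime-formula}. The main obstacle I anticipate is the careful bookkeeping of the various lower-order terms: one must verify that every contribution arising from the non-constant coefficients of $A$ (hence from $DA$ and from the failure of $\div Z$ to be exactly $N$) is genuinely $O(1)\cdot D_1(r)$ and can be absorbed into the stated $O(1)$ coefficient, rather than producing a spurious term of order $\tfrac1r D_1(r)$ with a wrong constant or a term not controlled by $D_1$. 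This requires the pointwise bound $\langle A(x)\nabla u,\nabla u\rangle \ge \lambda(x)|\nabla u|^2$ from (A1) to convert $|\nabla u|^2$-integrals into $D_1(r)$, together with the expansions \eqref{eq:div-expansion} and \eqref{div Z} borrowed from \cite{GaSVG}. The justification of the integration by parts and the Gauss--Green formula in the $W^{2,p}_{\loc}$ setting proceeds as in Proposition~\ref{der D}, invoking \cite[Proposition 2.7]{hofmann.mitrea.taylor:2010}.
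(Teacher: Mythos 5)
Your proposal is correct and follows essentially the same route as the paper's appendix: there, too, one writes $D_1'(r)=\int_{S_r}\langle A\nabla u,\nabla u\rangle$, applies the weak Gauss--Green formula to the field $\langle A\nabla u,\nabla u\rangle Z$ (Lemma~\ref{lem: as A.10}), handles the Hessian term by the product rule, integration by parts and the equation (Lemma~\ref{lem: as A.9}), and then performs the same $O(r)$/$O(1)$ bookkeeping using $\div Z = N+O(r)$, $A(0)=\id$ and the Lipschitz continuity of the coefficients. The only point to phrase carefully is that the $(DZ)^T$ correction is not merely $O(1)D_1(r)$ but has leading part $-\tfrac2r\bigl(1+O(r)\bigr)D_1(r)$ (since $DZ=\id+O(r)$ near $0$), which is precisely the source of the shift from $N$ to $N-2$ --- as your final collection of terms correctly reflects.
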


The proof of this proposition is very similar to the proof of \cite[Theorem A.2]{GaSVG}, where a somewhat different setting is considered. For the convenience of the reader, we give the complete proof of Proposition~\ref{sec:general-case-der-D-1} in the Appendix. The following corollary is an immediate consequence of Proposition~\ref{sec:general-case-der-D-1}. 

\begin{corollary}\label{lem: der D}
For every $r \in (0,\delta_1)$, we have that
\begin{align*}
D'(r)& = \left(\frac{N-2}r + O(1)\right) D_1(r) 
+ 2 \int_{S_r}  \frac{\langle A \nabla u,\nu\rangle^2}{\mu} \\
&  + \frac2r\int_{B_r}\bigl(V(x)u+  f(x,u)\bigr) \langle Z, \nabla u \rangle - \int_{S_r}\bigl(V(x)u^2 +  f(x,u) u\bigr).
\end{align*}
\end{corollary}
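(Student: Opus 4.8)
The plan is to differentiate the defining relation $D(r) = D_1(r) - g(r)$, where $g(r) := \int_{B_r}\bigl(V(x)u^2 + f(x,u)u\bigr)\,dx$, and then to insert into the result the formula for $D_1'(r)$ provided by Proposition~\ref{sec:general-case-der-D-1}.

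First I would note that, under (A1)--(A3) and since $u \in L^\infty_{\loc}(\Omega) \cap C^{1,\alpha}_{\loc}(\Omega)$, both $V$ and $x \mapsto f(x,u(x))$ are bounded on $B_{\delta_1}$ --- this is already recorded in the discussion preceding the statement --- so that the integrand $x \mapsto V(x)u(x)^2 + f(x,u(x))u(x)$ lies in $L^\infty(B_{\delta_1})$. By the coarea formula one then obtains $g(r) = \int_0^r\bigl(\int_{S_s}(V u^2 + f(x,u)u)\,d\sigma\bigr)\,ds$, so that $g$ is locally Lipschitz on $(0,\delta_1)$ and
\[
g'(r) = \int_{S_r}\bigl(V(x)u^2 + f(x,u)u\bigr)\,d\sigma \qquad \text{for a.e.\ } r\in(0,\delta_1),
\]
the derivative being understood in the same sense as in Proposition~\ref{sec:general-case-der-D-1}.

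Finally I would combine $D'(r) = D_1'(r) - g'(r)$ with the expression for $D_1'(r)$ from Proposition~\ref{sec:general-case-der-D-1}; the resulting identity is precisely the one asserted in the corollary. There is no genuine obstacle here: the only point deserving a word of justification is the differentiation of the volume integral $g$, which is handled by the coarea formula as soon as its integrand is known to be (locally) bounded, and this boundedness is guaranteed by (A1)--(A3).
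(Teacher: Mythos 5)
Your proposal is correct and coincides with the paper's argument: the corollary is obtained exactly by writing $D(r)=D_1(r)-\int_{B_r}\bigl(Vu^2+f(x,u)u\bigr)\,dx$, differentiating the volume term to produce the surface integral $\int_{S_r}\bigl(Vu^2+f(x,u)u\bigr)\,d\sigma$, and substituting the formula for $D_1'(r)$ from Proposition~\ref{sec:general-case-der-D-1}; the paper simply records this as an immediate consequence without further detail. Your extra remark on the boundedness of the integrand and the a.e.\ sense of the derivative is a harmless (and reasonable) precision.
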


Combining Lemma~\ref{lem: der H} and Corollary~\ref{lem: der D}, we may now estimate the derivative of $N$.

\begin{proposition}\label{prop: der N}
For any $r \in (0,\delta_1)$ with $H(r) \neq 0$, we have that 
\begin{align}
N'(r) \ge O(1) N(r) &+ \frac1{H(r)}\bigg[  ( N-2 + O(r) ) \int_{B_r}\bigl(V(x)u^2+  f(x,u) u\bigr)\nonumber  \\ 
 &+2 \int_{B_r}\bigl(V(x)u+ f(x,u)\bigr) \langle \nabla u, Z \rangle   - r\int_{S_r}\bigl(V(x)u^2 + f(x,u) u\bigr)\bigg].\label{1334}
\end{align}
\end{proposition}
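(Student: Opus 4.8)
The plan is to compute $N'(r)$ directly by differentiating $N(r) = rD(r)/H(r)$ and then substituting the expansions from Lemma~\ref{lem: der H} and Corollary~\ref{lem: der D}, discarding one manifestly nonnegative term along the way. Concretely, whenever $H(r)\neq 0$ I would write
\[
N'(r) = \frac{D(r) + r D'(r)}{H(r)} - \frac{r D(r) H'(r)}{H(r)^2}.
\]
Inserting $H'(r) = 2D(r) + \bigl(\tfrac{N-1}{r} + O(1)\bigr) H(r)$ from Lemma~\ref{lem: der H}, the term $-\tfrac{rD(r)}{H(r)}\cdot\tfrac{N-1}{r}$ combines with the $D(r)/H(r)$ term to give $(2-N)D(r)/H(r)$, the $O(1)$ part contributes $O(1) N(r)$, and the $2D(r)$ part produces $-2rD(r)^2/H(r)^2$. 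So up to this point
\[
N'(r) = O(1)N(r) + \frac{1}{H(r)}\Bigl[(2-N)D(r) + r D'(r) - \frac{2rD(r)^2}{H(r)}\Bigr].
\]

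Next I would substitute the formula for $D'(r)$ from Corollary~\ref{lem: der D}. The leading term $\bigl(\tfrac{N-2}{r}+O(1)\bigr) D_1(r)$, multiplied by $r$, gives $(N-2 + O(r))D_1(r)$, which partially cancels the $(2-N)D(r)$: writing $D(r) = D_1(r) - \int_{B_r}(Vu^2 + f(x,u)u)$, the $D_1$ pieces combine to an $O(r)D_1(r) = O(1)H(r)\cdot\frac{O(r)D_1(r)}{H(r)}$ contribution — more precisely $(N-2+O(r))D_1(r) - (N-2)D_1(r) = O(r)D_1(r)$, and since $O(r)D_1(r)/r = O(1)D(r) + O(1)\cdot(\text{lower order})$ one absorbs this into $O(1)N(r)$ plus controlled terms; the cleaner bookkeeping is to keep $(N-2)D(r)$ paired with $(N-2)D_1(r)$ so that the leftover is exactly $(N-2+O(r))\int_{B_r}(Vu^2+f(x,u)u)$ as in \eqref{1334}. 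The remaining three terms of $rD'(r)$ are: $2r\int_{S_r}\langle A\nabla u,\nu\rangle^2/\mu$, then $2\int_{B_r}(Vu+f(x,u))\langle Z,\nabla u\rangle$, and $-r\int_{S_r}(Vu^2 + f(x,u)u)$; the latter two appear verbatim in the claimed inequality.

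Finally, the only inequality (as opposed to identity) enters through the surface term combined with $-2rD(r)^2/H(r)$. By \eqref{D =} and Cauchy--Schwarz applied with the weight $\mu$,
\[
\Bigl(\int_{S_r} u\,\langle A\nabla u,\nu\rangle\Bigr)^2
= \Bigl(\int_{S_r} (u\sqrt{\mu})\cdot\tfrac{\langle A\nabla u,\nu\rangle}{\sqrt{\mu}}\Bigr)^2
\le \Bigl(\int_{S_r} u^2\mu\Bigr)\Bigl(\int_{S_r}\tfrac{\langle A\nabla u,\nu\rangle^2}{\mu}\Bigr),
\]
i.e. $D(r)^2 \le H(r)\int_{S_r}\langle A\nabla u,\nu\rangle^2/\mu$, so $2r\int_{S_r}\langle A\nabla u,\nu\rangle^2/\mu - \frac{2rD(r)^2}{H(r)} \ge 0$ and this bracketed pair can simply be dropped. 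What remains is exactly the right-hand side of \eqref{1334}, which proves the proposition. The main obstacle — really the only delicate point — is the bookkeeping in the cancellation of the $N-2$ terms: one must be careful to match $(N-2)D(r)$ against the $D(r)$-part of $(N-2+O(r))D_1(r)$ (not the $D_1(r)$-part) so that the surviving $O(r)$ errors multiply only $D(r)$ (absorbed into $O(1)N(r)$) and the zero-order integrals (kept explicitly), rather than $D_1(r)$, which need not be comparable to $H(r)/r$ without invoking the frequency bound.
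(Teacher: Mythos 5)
Your proposal is correct and follows essentially the same route as the paper's proof: differentiate $N = rD/H$, insert Lemma~\ref{lem: der H} and Corollary~\ref{lem: der D}, regroup using $D = D_1 - \int_{B_r}(Vu^2 + f(x,u)u)$ so the $O(r)$ errors multiply only $D(r)$ (absorbed into $O(1)N(r)$) and the zero-order integral, and finally drop the nonnegative pair $2r\int_{S_r}\langle A\nabla u,\nu\rangle^2/\mu - 2rD(r)^2/H(r)$ via the $\mu$-weighted Cauchy--Schwarz inequality. Your closing remark about the bookkeeping of the $N-2$ cancellation is exactly the point handled (implicitly) in the paper's displayed computation.
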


\begin{proof}
By Lemma~\ref{lem: der H} and Corollary~\ref{lem: der D}, we compute that 
\begin{align*}
N'(r) =& \frac{1}{H(r)} \Bigl[D(r) + rD'(r)  -r \frac{D(r) H'(r)}{H(r)} \Bigr]\\
=&  \frac{1}{H(r)} \left[(2-N + O(r)) D(r) + rD'(r)  -2r \frac{D^2(r)}{H(r)} \right] \\
 =& \frac{1}{H(r)} \Bigl[ O(r) D(r) +  \left( N-2 + O(r) \right) \int_{B_r}\bigl(V(x)u^2+  f(x,u) u\bigr)  \\ 
&+ 2 \int_{B_r} \bigl(V(x)u +  f(x,u)\bigr) \langle \nabla u, Z \rangle   - r \int_{S_r}\bigl(V(x)u^2 + f(x,u) u\bigr) \Bigr]\\
&+ \frac{2r}{H(r)} \biggl[ \int_{S_r} \frac{\langle A \nabla u, \nu \rangle^2}{\mu} - \frac{\bigl( \int_{S_r} u \langle A \nabla u,\nu \rangle\bigr)^2 }{\int_{S_r} u^2 \mu} \biggr].
\end{align*}
Hence the Cauchy-Schwarz inequality applied to the last term (this is possible by the ellipticity of $A$) yields the desired result.
\end{proof}

\begin{remark}
When $A \equiv  \id$ in $B_{\delta_1}$, in the previous proposition all the error terms $O(1)$ and $O(r)$ vanish, $Z(x) \equiv x$, and in the case 
$V \equiv 0$ and $f(x,u) = f(u) = |u|^{q-2}u$ we obtain exactly Proposition \ref{der N}, since 
$$
\int_{B_r} f(u) \langle \nabla u, Z \rangle  = \int_{B_r}\langle \nabla F(u), Z \rangle = r \int_{S_r} F(u) - N \int_{B_r} F(u)
$$ 
with $F(u)= \int_0^u f(\tau)d\tau= \frac{1}{q}|u|^q$.
\end{remark}

\medskip

To proceed further with the proof of Theorem \ref{thm: main 2}, we introduce the quantity 
$$
d(r):=  \int_{B_r}F(x,u)\,dx \qquad \text{for $r \in (0,\delta_1)$.}
$$
Since $F(x,u(x)) \ge 0$ on $B_{\delta_1}$ by assumptions (A3)i) and (\ref{eq:extra-assumption}), the function $d$ is differentiable with 

$$
d'(r) = \int_{S_r} F(x,u)\,d\sigma \ge 0 \qquad \text{for $r \in (0,\delta_1)$.}
$$

\begin{corollary}
\label{cor-der-N}
For $0<r<\delta_1$, we have that
\begin{equation}
  \label{eq:D-prime-ineq}
  \begin{split}
D'(r) & \ge \left( \frac{N-2}{r} + O(1)\right)D_1(r)  + (2-q)d'(r) \\
& -  \Bigl(\frac{2N}{r}+O(1)\Bigr)d(r)    + \frac2{r} \int_{B_r}   V u \langle \nabla u ,Z\rangle - \int_{S_r} V u^2.
\end{split}
\end{equation}
Moreover, if $H(r) \neq 0$, we have that
\begin{align}
N'(r) \ge O(1) N(r)&+ \frac{1}{H(r)}\biggl[ r (2-q)d'(r) -   \bigl(2N+O(r)\bigr)d(r)  + 2\int_{B_r}  V u \langle \nabla u, Z \rangle\nonumber\\
& +(N-2 + O(r)) \int_{B_r} \Bigl( V u^2 + f(x,u)u \Bigr)  - r\int_{S_r} V u^2 \biggr].  \label{eq:N-prime-ineq}
\end{align}
\end{corollary}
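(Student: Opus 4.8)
The plan is to derive \eqref{eq:D-prime-ineq} and \eqref{eq:N-prime-ineq} from Corollary~\ref{lem: der D} and Proposition~\ref{prop: der N} respectively, the only real work being to control the terms involving $f(x,u)$ by $d$ and $d'$ using assumption (A3), exactly as in the model case but now with the weight $Z$ replacing $x$. First I would treat the term $\int_{B_r} f(x,u)\langle \nabla u, Z\rangle$. Writing $\langle \nabla u, Z\rangle = \langle Z, \nabla_x [F(x,u(x))]\rangle - \langle Z, \nabla_1 F(x,u(x))\rangle$ (chain rule, valid a.e.\ since $u \in W^{2,p}_{\loc}$, $F(\cdot,s) \in C^1$, and the composition is Sobolev), I would integrate the first summand by parts:
\[
\int_{B_r}\langle Z, \nabla_x[F(x,u)]\rangle = \int_{S_r}F(x,u)\langle Z,\nu\rangle - \int_{B_r}F(x,u)\div Z = r\,d'(r) - (N+O(r))\,d(r),
\]
using $\langle Z,\nu\rangle = r$ on $S_r$ and \eqref{div Z}. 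For the second summand, $|\langle Z,\nabla_1 F(x,u)\rangle| \le |Z|\,\kappa_1 F(x,u) \le C r\, F(x,u)$ by (A3)iii) and $|Z(x)| = |x|/\mu \cdot |A(x)\nabla|x|| \le C|x|$, so it is $O(r)\,d(r)$ after integrating over $B_r$. Altogether $\frac{2}{r}\int_{B_r} f(x,u)\langle\nabla u,Z\rangle = 2d'(r) - (\frac{2N}{r}+O(1))d(r)$.

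Next I would handle the boundary term $\int_{S_r} f(x,u)u$: by (A3)i) we have $0 < f(x,u)u \le q\,F(x,u)$ pointwise on $S_r$ (using $|u| < \eps_0$ from \eqref{eq:extra-assumption}), hence $\int_{S_r} f(x,u)u \le q\,d'(r)$, and since this term enters \eqref{D1-prime-formula} with a minus sign it contributes $\ge -q\,d'(r)$; combined with the $+2d'(r)$ above this gives the $(2-q)d'(r)$ in \eqref{eq:D-prime-ineq}. The remaining $f$-free terms in Corollary~\ref{lem: der D} — namely $2\int_{S_r}\langle A\nabla u,\nu\rangle^2/\mu \ge 0$ and the $V$-terms — are simply carried along, the positive surface integral being discarded. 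This proves \eqref{eq:D-prime-ineq}. For \eqref{eq:N-prime-ineq} I would run the identical bookkeeping inside the bracket of Proposition~\ref{prop: der N}: the term $2\int_{B_r}f(x,u)\langle\nabla u,Z\rangle$ becomes $2r\,d'(r) - (2N+O(r))d(r)$ by the same integration by parts; $(N-2+O(r))\int_{B_r}f(x,u)u$ is left as is; $-r\int_{S_r}f(x,u)u \ge -qr\,d'(r)$ by (A3)i); summing the two $d'$-contributions yields $r(2-q)d'(r)$, and summing the two $d$-contributions yields $-(2N+O(r))d(r)$, giving \eqref{eq:N-prime-ineq} after collecting the $V$-terms.

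The main obstacle is purely technical: justifying the chain rule and the integration by parts in the low-regularity case $q=1$ (where $f(\cdot,u)$ is only $L^\infty_{\loc}$ and $u$ is only $W^{2,p}_{\loc}\cap C^{1,\alpha}_{\loc}$, so $F(x,u(x))$ is $W^{1,p}_{\loc}$ but not $C^1$). This is handled exactly as in Proposition~\ref{der D}: one invokes the Gauss--Green formula of \cite[Proposition 2.7]{hofmann.mitrea.taylor:2010} — valid for continuous vector fields with $L^1$ divergence — applied here to $w = F(x,u)\,Z$, noting $\div w = \langle\nabla_x[F(x,u)],Z\rangle + F(x,u)\div Z = f(x,u)\langle\nabla u,Z\rangle + \langle\nabla_1 F(x,u),Z\rangle + F(x,u)\div Z \in L^1(B_r)$, together with the a.e.\ identity $\nabla_x[F(x,u(x))] = f(x,u)\nabla u + \nabla_1 F(x,u)$ which follows from the definition of weak derivatives. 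With these justifications in place the corollary is immediate, and one should also record — for use in the subsequent argument — the elementary consequence that $\int_{B_r}\langle A\nabla u,\nu\rangle^2/\mu$ has been discarded, i.e. the inequalities are one-sided by design.
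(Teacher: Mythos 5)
Your proposal is correct and follows essentially the same route as the paper's proof: the identical integration by parts of $\int_{B_r} f(x,u)\langle Z,\nabla u\rangle$ through $F(x,u)$ using $\langle Z,\nu\rangle=r$ on $S_r$ and \eqref{div Z}, the same control of $\langle \nabla_1 F(x,u),Z\rangle$ via (A3)iii) together with $\|Z\|_{L^\infty(B_r)}=O(r)$, the same use of (A3)i) to absorb the surface terms into $(2-q)d'(r)$, the same discarding of the nonnegative term $2\int_{S_r}\langle A\nabla u,\nu\rangle^2/\mu$, and the same bookkeeping applied to Proposition~\ref{prop: der N} for \eqref{eq:N-prime-ineq}. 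Only a trivial slip in your closing remark: the discarded quadratic term is the surface integral over $S_r$, not an integral over $B_r$.
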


\begin{proof}
Let $r \in (0,\delta_1)$. We first note that
\begin{align}
\int_{B_r}f(x,u) \langle Z, \nabla u \rangle 
&= \int_{B_r} \bigl \langle \nabla_x F(x,u(x))- \nabla_1 F(x,u), Z \bigr \rangle \nonumber\\ 
&=  \int_{S_r}F(x,u) \langle Z,\nu \rangle   - \int_{B_r}\Bigl(F(x,u) \div Z + \langle \nabla_1 F(x,u), Z \rangle \Bigr) \nonumber\\
&=  r\int_{S_r}F(x,u)   - \int_{B_r}\Bigl( (N+ O(r)) F(x,u) + \langle \nabla_1 F(x,u), Z \rangle \Bigr),\label{1332-prelim}
\end{align}
where we recall that $\langle Z, \nu \rangle = r$ on $S_r$, that $\div Z = N +O(r)$ (see \eqref{div Z}), and that $\nabla_1 F(x,s)$ denotes the gradient of $F(x,s)$ with respect to the the first $N$ variables. Moreover, we have 
\begin{equation}
  \label{eq:Z-pointwise-exp}
\|Z\|_{L^\infty(B_r)}= O(r) \qquad \text{for $r \in (0,\delta_1)$}
\end{equation}
by definition of $Z$ and (A1), and thus assumption (A3)iii) gives 
\begin{equation}\label{1332}
\Bigl|\int_{B_r} 
\langle \nabla_1 F(x,u),Z \rangle \Bigr| \le \kappa_1 \int_{B_r} F(x,u) |Z| \le O(r)  \int_{B_r} F(x,u)  \qquad \text{for $r \in (0,\delta_1)$.}
\end{equation}
Combining (\ref{1332-prelim}) and (\ref{1332}) yields that 
\begin{equation}
\label{1332-1}
\int_{B_r}f(x,u) \langle Z, \nabla u \rangle \ge 
r\int_{S_r}F(x,u)   - (N+ O(r))  \int_{B_r} F(x,u).
\end{equation}
Therefore, by Corollary \ref{lem: der D}, we infer that
\begin{align}
D'(r) 
\ge &   \left(\frac{N-2}r + O(1)\right) D_1(r) 
+ 2 \int_{S_r}  \frac{\langle A \nabla u,\nu\rangle^2}{\mu} + \frac2r\int_{B_r}  V u \langle Z, \nabla u \rangle  \nonumber \\
&- \frac2r (N+O(r))\int_{B_r} F(x,u) + \int_{S_r}  \Bigl(2F(x,u) - f(x,u)u-V u^2\Bigr)
\nonumber
\end{align}
for $r \in (0,\delta_1)$. Since also 
\begin{equation}\label{1333}
\int_{S_r}\Bigl(2 F(x,u) - f(x,u)u\Bigr)\,dx \ge (2-q) \int_{S_r} F(x,u)\,dx
\qquad \text{for $0<r<\delta_1$}
\end{equation}
by assumptions (A3)i) and (\ref{eq:extra-assumption}), we obtain \eqref{eq:D-prime-ineq}. In a similar way, estimate \eqref{eq:N-prime-ineq} can be obtained starting from \eqref{1334}, using \eqref{1332-1} and \eqref{1333}.
\end{proof}

We add further basic estimates to simplify the inequalities in Corollary~\ref{cor-der-N}.
  
\begin{lemma}\label{on u^2}
For $r \in (0,\delta_1)$, we have that 
\begin{align}
  \label{eq:u^2-S-r}
\int_{S_r} u^2 \, dx &\le O(1) \|u\|_{L^\infty(S_r)}^{2-q}d'(r)\\
\int_{B_r} u^2 \, dx &\le O(1) \|u\|_{L^\infty(B_r)}^{2-q}d(r) 
  \label{eq:u^2}
\end{align}
and 
\begin{equation}
\label{estimate linear part}
\Bigl| \int_{B_r}  V u \langle \nabla u, Z\rangle \Bigr| \le O(1)\Bigl( d(r)+ D_1(r) \Bigr)
\end{equation}
\end{lemma}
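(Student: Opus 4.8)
The plan is to establish the three estimates in Lemma~\ref{on u^2} in turn, using the lower bound \eqref{rem: on S} of the form $F(x,s) \ge \frac{\kappa_2}{\eps_0^q}|s|^q$ together with the various $O(r^\gamma)$ bounds already available. For \eqref{eq:u^2-S-r}, first I would write, for $x \in S_r$ with $u(x) \neq 0$,
\[
u(x)^2 = |u(x)|^{2-q} |u(x)|^q \le \|u\|_{L^\infty(S_r)}^{2-q} |u(x)|^q \le \frac{\eps_0^q}{\kappa_2}\|u\|_{L^\infty(S_r)}^{2-q} F(x,u(x)),
\]
using that $|u| < \eps_0$ in $\Omega$ by \eqref{eq:extra-assumption}, and integrate over $S_r$ to get $\int_{S_r} u^2 \le \frac{\eps_0^q}{\kappa_2}\|u\|_{L^\infty(S_r)}^{2-q} d'(r)$, which is the desired bound with $O(1) = \eps_0^q/\kappa_2$. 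The bound \eqref{eq:u^2} follows by exactly the same pointwise inequality, now integrated over $B_r$ against the definition of $d(r) = \int_{B_r} F(x,u)\,dx$.

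The remaining estimate \eqref{estimate linear part} is slightly more involved but still routine. The plan is to start from the Cauchy--Schwarz inequality applied pointwise, using $\|Z\|_{L^\infty(B_r)} = O(r)$ from \eqref{eq:Z-pointwise-exp} and the local boundedness of $V$ from (A2) (recalling that on $B_{\delta_1}$ the function $V$ is bounded, as noted after \eqref{eq:-r-*-eq}):
\[
\Bigl| \int_{B_r} V u \langle \nabla u, Z \rangle \Bigr| \le \|V\|_{L^\infty(B_r)} \|Z\|_{L^\infty(B_r)} \int_{B_r} |u|\,|\nabla u| \le O(r) \int_{B_r} \Bigl( u^2 + |\nabla u|^2 \Bigr),
\]
where the last step uses the elementary inequality $|u||\nabla u| \le \tfrac12 (u^2 + |\nabla u|^2)$. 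For the gradient term I would use the ellipticity lower bound in (A1), which gives $\lambda_* |\nabla u|^2 \le \langle A \nabla u, \nabla u \rangle$ on $B_{\delta_1}$ with $\lambda_*$ a positive lower bound for $\lambda$ there (again this bound is available since $\lambda^{-1}$ is bounded on $B_{\delta_1}$); hence $\int_{B_r} |\nabla u|^2 \le \lambda_*^{-1} D_1(r)$. For the $u^2$ term I would invoke \eqref{eq:u^2} already proved in the same lemma, noting that $\|u\|_{L^\infty(B_r)}^{2-q} \le \eps_0^{2-q}$ is bounded, so $\int_{B_r} u^2 \le O(1) d(r)$. Combining, $\Bigl|\int_{B_r} V u \langle \nabla u, Z\rangle\Bigr| \le O(r)\bigl( O(1) d(r) + \lambda_*^{-1} D_1(r)\bigr) \le O(1)\bigl( d(r) + D_1(r)\bigr)$, as claimed (absorbing the extra factor $r \le \delta_1$ into the constant).

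I do not expect any serious obstacle here; the only point requiring a little care is keeping track of which constants are genuinely uniform on $(0,\delta_1)$ in the sense of Definition~\ref{rmk: on uniformity} — in particular, one must use that $V$, $\lambda$, $\lambda^{-1}$, and $x \mapsto F(x,u(x))$ are all bounded on $B_{\delta_1}$, a fact recorded just before \eqref{eq:-r-*-eq}, rather than merely locally bounded. Once that is noted, the three inequalities are immediate consequences of \eqref{rem: on S}, the ellipticity assumption (A1), and the pointwise bound \eqref{eq:Z-pointwise-exp} on $Z$.
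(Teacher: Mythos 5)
Your proposal is correct and follows essentially the same route as the paper: the pointwise bound $u^2 \le \|u\|_{L^\infty}^{2-q}|u|^q$ combined with \eqref{rem: on S} for the first two estimates, and Young's inequality plus $\|Z\|_{L^\infty(B_r)}=O(r)$, ellipticity, and \eqref{eq:u^2} for \eqref{estimate linear part}. The only cosmetic differences (integrating the pointwise inequality directly over $B_r$ rather than integrating \eqref{eq:u^2-S-r} in $r$, and using an unweighted rather than weighted Young inequality) do not affect the argument.
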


\begin{proof}
For $r \in (0,\delta_1)$, we have, by \eqref{rem: on S}, 
$$
\int_{S_r} u^2  \le \|u\|_{L^\infty(S_r)}^{2-q} \int_{S_r} |u|^q
\le  \frac{\eps_0^q}{\kappa_2}\|u\|_{L^\infty(S_r)}^{2-q} \int_{S_r} F(x,u) = O(1) \|u\|_{L^\infty(S_r)}^{2-q}d'(r), 
$$ 
as claimed in (\ref{eq:u^2-S-r}). Now (\ref{eq:u^2}) follows immediately by integrating (\ref{eq:u^2-S-r}). Moreover, recalling (\ref{eq:Z-pointwise-exp}), we obtain that
\begin{align*}
\Bigl| \int_{B_r}  V u \langle \nabla u, Z\rangle\,dx \Bigr|   &\le 
  O(r) \| V\|_{L^\infty(B_r)} \int_{B_r} |u| |\nabla u| \le O(r^2) \int_{B_r} u^2 + O(1)\int_{B_r} |\nabla u|^2 \\
& \le O(r^2)\|u\|_{L^\infty(B_r)}^{2-q}d(r) + O(1)\int_{B_r} \langle A \nabla u, \nabla u \rangle,
\end{align*} 
where we have used (\ref{eq:u^2}) in the last step. This gives (\ref{estimate linear part}).
\end{proof}

We may now simplify the inequalities in Corollary~\ref{cor-der-N} as follows.

\begin{corollary}
\label{cor-der-N-1}
There exist a constant $C>0$ such that for $0<r<\delta_1$ we have 
\begin{equation}
  \label{eq:D-prime-ineq-1}
D'(r)  \ge - \frac{C}{r} \Bigl[D_1(r)+d(r)\Bigr]   + \Bigl[(2-q)- C \|u\|_{L^\infty(S_r)}^{2-q}\Bigr]d'(r) 
\end{equation}
Moreover, if $H(r) \neq 0$ and $N(r)>0$, we have that 
\begin{align}
N'(r) \ge -C N(r)&+ \frac{1}{H(r)}\biggl[ r \Bigl((2-q)- C \|u\|_{L^\infty(S_r)}^{2-q}\Bigr) d'(r) -  C \Bigl(D_1(r)+ d(r)\Bigr)\biggr] \label{eq:N-prime-ineq-1}
\end{align}
\end{corollary}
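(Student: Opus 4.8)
The plan is to deduce Corollary~\ref{cor-der-N-1} from Corollary~\ref{cor-der-N} by absorbing all the linear-part and surface terms into the two quantities $D_1(r)$ and $d(r)$ with the help of Lemma~\ref{on u^2}, together with the elementary bounds $\lambda \le \mu \le \lambda^{-1}$ and $\|V\|_{L^\infty(B_{\delta_1})} < \infty$. First I would rewrite the right-hand side of \eqref{eq:D-prime-ineq}: the term $\bigl(\tfrac{N-2}{r}+O(1)\bigr)D_1(r)$ is bounded below by $-\tfrac{C}{r}D_1(r)$ since $D_1 \ge 0$ and $r < \delta_1$; the term $-\bigl(\tfrac{2N}{r}+O(1)\bigr)d(r)$ is bounded below by $-\tfrac{C}{r}d(r)$ since $d \ge 0$; the term $\tfrac{2}{r}\int_{B_r}Vu\langle\nabla u, Z\rangle$ is controlled by $\tfrac{O(1)}{r}(d(r)+D_1(r))$ via \eqref{estimate linear part} (note the extra factor $1/r$ here, which is harmless after enlarging $C$); and finally $-\int_{S_r}Vu^2$ is bounded below by $-O(1)\|V\|_{L^\infty}\int_{S_r}u^2 \ge -C\|u\|_{L^\infty(S_r)}^{2-q}d'(r)$ by \eqref{eq:u^2-S-r}. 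Collecting these estimates and keeping the good term $(2-q)d'(r)$ gives \eqref{eq:D-prime-ineq-1}.

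For \eqref{eq:N-prime-ineq-1} I would argue analogously starting from \eqref{eq:N-prime-ineq}. Since we assume $N(r)>0$, the term $O(1)N(r)$ is bounded below by $-CN(r)$. Inside the bracket multiplied by $1/H(r)$: the term $r(2-q)d'(r)$ is kept; $-\bigl(2N+O(r)\bigr)d(r)$ is bounded below by $-Cd(r)$ (using $r<\delta_1$ to absorb the $O(r)$); $2\int_{B_r}Vu\langle\nabla u, Z\rangle$ is controlled by $O(1)(d(r)+D_1(r))$ via \eqref{estimate linear part}; the term $(N-2+O(r))\int_{B_r}(Vu^2 + f(x,u)u)$ needs a little care, namely we bound $\int_{B_r}Vu^2$ below and above by $\pm O(1)\|u\|_{L^\infty(B_r)}^{2-q}d(r)$ using \eqref{eq:u^2}, while $0 \le \int_{B_r}f(x,u)u \le q\int_{B_r}F(x,u) = qd(r)$ by (A3)i), so the whole term is $O(1)d(r)$; and $-r\int_{S_r}Vu^2$ is bounded below by $-Cr\|u\|_{L^\infty(S_r)}^{2-q}d'(r)$ by \eqref{eq:u^2-S-r}, which combines with the good term $r(2-q)d'(r)$ to produce $r\bigl((2-q)-C\|u\|_{L^\infty(S_r)}^{2-q}\bigr)d'(r)$ after adjusting $C$. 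Finally, since $\|u\|_{L^\infty(B_r)}\le \eps_0$ is bounded, all the $\|u\|_{L^\infty}^{2-q}d(r)$ contributions can simply be absorbed into $Cd(r)$, leaving \eqref{eq:N-prime-ineq-1}.

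This corollary is essentially bookkeeping, so I do not expect a genuine obstacle; the only point requiring mild attention is the uniformity of the constant $C$, i.e.\ checking that every $O(1)$ and $O(r)$ above really does come with a bound independent of $r\in(0,\delta_1)$. This is guaranteed by Definition~\ref{rmk: on uniformity} together with the fact, noted just before \eqref{eq:-r-*-eq}, that $V$, $\lambda$, $\lambda^{-1}$, $x\mapsto f(x,u(x))$ and $x\mapsto F(x,u(x))$ are all bounded on $B_{\delta_1}(0)$, and $\|u\|_{L^\infty(B_{\delta_1})}\le\eps_0$. A second minor point is that \eqref{eq:N-prime-ineq-1} carries the good coefficient $r(2-q)-Cr\|u\|_{L^\infty(S_r)}^{2-q}$ on $d'(r)$ exactly as in \eqref{eq:D-prime-ineq-1} up to the factor $r$, which is consistent with the scaling $N(r)=rD(r)/H(r)$; this will later be exploited (as in the model case, where $\|u\|_{L^\infty(S_r)}\to 0$ as $r\to r_0^+$) to make the $d'(r)$-coefficient strictly positive on a small interval.
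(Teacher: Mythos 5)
Your argument is correct and coincides with the paper's own proof: both deduce the corollary by inserting the estimates \eqref{estimate linear part}, \eqref{eq:u^2-S-r} (and \eqref{eq:u^2}) into Corollary~\ref{cor-der-N}, using (A3)i) to control $\int_{B_r} f(x,u)u$ by $q\,d(r)$ and absorbing all remaining terms into $\frac{C}{r}\bigl[D_1(r)+d(r)\bigr]$, $C\,d(r)$ and $C\|u\|_{L^\infty(S_r)}^{2-q}d'(r)$. The uniformity of the constants that you flag is indeed guaranteed by Definition~\ref{rmk: on uniformity} and the boundedness of $V$, $\lambda^{\pm 1}$, $f(\cdot,u)$, $F(\cdot,u)$ on $B_{\delta_1}$, exactly as in the paper.
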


\begin{proof}
Inserting successively the estimates (\ref{estimate linear part}) and (\ref{eq:u^2-S-r}) into (\ref{eq:D-prime-ineq}), we obtain that, with a suitable constant $C>0$ changing its value from line to line,  
\begin{align*}
D'(r) & \ge - \frac{C}{r}\Bigl[D_1(r) +d(r)\Bigr] + (2-q)d'(r)  + \frac2{r} \int_{B_r}   V u \langle \nabla u ,Z\rangle - \int_{S_r} V u^2 \\
& \ge - \frac{C}{r}\Bigl[D_1(r)+d(r)\Bigr] + (2-q)d'(r)  -C \int_{S_r}  u^2 \\
& \ge - \frac{C}{r}\Bigl[D_1(r) +d(r)\Bigr] + \Bigl[(2-q)- C\|u\|_{L^\infty(B_r)}^{2-q}\Bigr] d'(r), 
\end{align*}
as claimed in (\ref{eq:D-prime-ineq-1}). Similarly, (\ref{eq:N-prime-ineq-1}) follows by (\ref{eq:N-prime-ineq}), using (\ref{eq:u^2-S-r}), (\ref{estimate linear part}), and the fact that
\[
(N-2+O(r)) \int_{B_r} f(x,u) u \ge -C d(r)
\]
by assumption (A1)i).
\end{proof}

We are now ready to complete the 

\begin{proof}[Proof of Theorem \ref{main-2simplified}]
As noted before, it suffices to prove the implication (\ref{eq:-r-*-eq}). 
Arguing by contradiction, we suppose that $u \equiv 0$ in a neighborhood of $0$, but $u \not \equiv 0$ in $B_{\delta_1}$. By assumption (A3)i), this implies that 
$F(\cdot,u(\cdot)) \equiv 0$ in a neighborhood of $0$ and 
$F(\cdot,u(\cdot)) \not \equiv 0$ in $B_{\delta_1}$.
Consequently, setting 
$$
r_0:= \sup \{r >0\::\: d(r)=0\}, 
$$
we have $0<r_0<\delta_1$ and $u \equiv 0$ on $B_{r_0}$. We first claim that there exists $r_1 \in (r_0,\delta_1)$ and $C_3>0$ such that 
\begin{equation}
\label{lem: lower-bound-D-1}
D(r) \ge C_3 d(r)>0\qquad \text{for every $r \in (r_0,r_1)$.}
\end{equation}
To see this, we recall that by \eqref{eq:D-prime-ineq-1} we have  
\begin{equation*}
D'(r)  \ge - \frac{C}{r_0} \Bigl[D_1(r)+d(r)\Bigr]   + \Bigl[(2-q)- C \|u\|_{L^\infty(S_r)}^{2-q}\Bigr]d'(r) \qquad \text{for $r \in (r_0,\delta_1)$,}
\end{equation*}
whereas (\ref{eq:u^2}) and assumption (A3)i) implies that  
\begin{align}
\bigl|D(r)-D_1(r) \bigr| &\le \|V\|_{L^\infty(B_r)} \int_{B_r} u^2 + \int_{B_r} f(x,u) u \nonumber\\
&\le  \Bigl( O(1) \|u\|_{L^\infty(B_r)}^{2-q} + q\Bigr)d(r) \le O(1)d(r) \label{1711}
\end{align}
Consequently, 
\begin{equation}
  \label{eq:D-prime-ineq-2}
D'(r)  \ge - C_0 D(r) - C_1 d(r)   + \Bigl[(2-q)- C \|u\|_{L^\infty(S_r)}^{2-q}\Bigr]d'(r) \quad \text{for $r \in (r_0,\delta_1)$}
\end{equation}
with $C_0 = \frac{C}{r_0}$ and a further constant $C_1>0$. We also recall that $D(r_0)=d(r_0)=0$ since $u \equiv 0$ on $B_{r_0}$. Therefore (\ref{eq:D-prime-ineq-2}) implies that 
\begin{align}
e^{C_0 r} D(r) & =  \int_{r_0}^r  \frac{d}{d\tau} \left( e^{C_0 \tau} D(\tau) \right) \,d\tau \nonumber \\
&\ge \int_{r_0}^r  e^{C_0 \tau} \Bigl( \Bigl[(2-q)- C \|u\|_{L^\infty(S_\tau)}^{2-q}\Bigr]d'(\tau)-C_1 d(\tau)\Bigr)\,d\tau \nonumber\\
&  \ge e^{C_0 r_0}(2-q)d(r) - C e^{C_0 \delta_0} \|u\|_{L^\infty(B_r)}^{2-q} d(r)
-C_1e^{C_0 \delta_0} d(r)(r-r_0) \nonumber \\
&= e^{C_0 r_0}\Bigl[(2-q)- C \|u\|_{L^\infty(B_r)}^{2-q} e^{C_0(\delta_0-r_0)} -C_1(r-r_0) e^{C_0(\delta_0-r_0)} \Bigr]d(r)
\label{D-ge-d-proof}
\end{align}
for $r \in (r_0,\delta_1)$. Here we used again the fact that $d$ is increasing. Since, by the continuity of $u$, 
$$
\|u\|_{L^\infty(B_r)}^{2-q} \to \|u\|_{L^\infty(B_{r_0})}^{2-q} =0 \qquad \text{as $r \to r_0^+$,}
$$
we may now choose $r_1>r_0$ sufficiently close to $r_0$ such that 
\begin{equation}
  \label{eq:u-continuity-est}
\left(C \|u\|_{L^\infty(B_r)}^{2-q} +C_1(r-r_0)\right) e^{C_0(\delta_0-r_0)} < \frac{2-q}{2} \qquad \text{for $r \in (r_0,r_1)$.}  
\end{equation}
Then (\ref{D-ge-d-proof}) gives rise to (\ref{lem: lower-bound-D-1}) with 
$C_3 := e^{C_0 (r_0-\delta_0)}\frac{2-q}{2}>0$ and this choice of $r_1$.

Since $u \not \equiv 0$ on $B_r$ for every $r \in (r_0,\delta_1)$, we may now pick $r_2 \in (r_0,r_1)$ such that $H(r_2) \neq 0$, and we define
\[
r_3:=  \inf \left\{r \in (0,r_2): H(s)>0 \ \text{for every $s \in (r, r_2)$}\right\} \ge r_0.
\]
For $r \in (r_3,r_2]$, we then have $H(r)>0$ and $D(r)>0$ by (\ref{lem: lower-bound-D-1}), and thus $N(r)$ is well defined and positive. Moreover, we may invoke (\ref{eq:N-prime-ineq-1}) for $r \in (r_3,r_2]$, which yields that 
\begin{align*}
\frac{N'(r)}{N(r)}  &\ge -C+ \frac{1}{D(r)}\biggl[  \Bigl((2-q)- C \|u\|_{L^\infty(S_r)}^{2-q}\Bigr) d'(r) -  \frac{C}{r} \Bigl(D_1(r)+ d(r)\Bigr)\biggr]\\
&\ge -C- \frac{C(D_1(r)+d(r))}{r_0 D(r)} \ge - C_4 \qquad \text{for $r \in (r_3,r_2]$} 
\end{align*}
with a constant $C_4>0$ as a consequence of (\ref{lem: lower-bound-D-1}), (\ref{1711}) and (\ref{eq:u-continuity-est}). Integrating this inequality in $(r,r_2)$ with $r \in (r_3,r_2)$, we infer that
\[
N(r) \le N(r_2) e^{C_4 r_2} =: C_5  \qquad \text{for $r_3 < r \le r_2$}.
\]
At this point we proceed as in the model case studied in Section \ref{sec: proof simple case}. By Lemma \ref{lem: der H}, the boundedness of the frequency implies that 
\[
\frac{d}{dr} \log \left( \frac{H(r)}{r^{N-1}}\right) = \frac{2 N(r)}{r} + O(1) \le C_5   \qquad \text{for $r \in (r_3,r_2],$} 
\]
for a constant $C_5>0$, whereas on the other hand we have that 
$$
\lim_{r \to r_3^+} \log \left( \frac{H(r)}{r^{N-1}}\right)= -\infty
$$
since $H(r_3)=0$. This is a contradiction, and hence (\ref{eq:-r-*-eq}) is proved. We thus have finished the proof of Theorem~\ref{main-2simplified}.
\end{proof}

\appendix

\section{}
\label{sec:appendix}

Here we give the proof of Proposition~\ref{sec:general-case-der-D-1}, following closely the estimates in \cite[Appendix A]{GaSVG}. We do this for the convenience of the reader since the setting of \cite{GaSVG} is somewhat different. 
In the following statement and in the rest of this section, we omit to explicitly write the sum over repeated indexes, in order to ease the notation.

\begin{lemma}\label{lem: as A.9}
For any $r \in (0,\delta_1)$, we have that 
\begin{align*}
\int_{B_r}  &\left\langle Z, \nabla \left(\langle A\nabla u, \nabla u \rangle \right)\right\rangle  = \int_{B_r}  \langle Z, \nabla a_{hl} \rangle \pa_h u \, \pa_l u + 2 \int_{S_r}  \langle Z, \nabla u \rangle \langle A \nabla u,\nu \rangle \\
& + 2 \int_{B_r}  \langle Z,\nabla u \rangle \bigl(V(x)u+ f(x,u)\bigr)  - 2 \int_{B_r}  a_{hl} \, \pa_h Z_j \, \pa_j u \, \pa_l u
\end{align*}
\end{lemma}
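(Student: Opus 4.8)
The plan is to read this as a Rellich--Pohozaev type identity: one differentiates the scalar $\langle A\nabla u,\nabla u\rangle$, contracts with the vector field $Z$, and integrates by parts once, using the equation \eqref{div eq} to dispose of the term containing $\div(A\nabla u)$.

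First I would expand pointwise. Writing $\langle A\nabla u,\nabla u\rangle = a_{hl}\,\pa_h u\,\pa_l u$ and using the symmetry $a_{hl}=a_{lh}$, the two second-order contributions coincide, so that a.e.\ in $B_r$
\[
\bigl\langle Z,\nabla(\langle A\nabla u,\nabla u\rangle)\bigr\rangle
= \langle Z,\nabla a_{hl}\rangle\,\pa_h u\,\pa_l u + 2\,Z_k\,a_{hl}\,\pa_k\pa_h u\,\pa_l u .
\]
The first summand already equals the first term in the statement, so it remains to handle $I:=\int_{B_r} Z_k\,a_{hl}\,\pa_k\pa_h u\,\pa_l u$. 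For this I would use the product rule
\[
Z_k\,a_{hl}\,\pa_k\pa_h u\,\pa_l u
= \pa_h\!\bigl(\langle Z,\nabla u\rangle\, a_{hl}\,\pa_l u\bigr)
- a_{hl}\,\pa_h Z_k\,\pa_k u\,\pa_l u
- \langle Z,\nabla u\rangle\,\pa_h(a_{hl}\,\pa_l u),
\]
where the identification of the second-derivative term relies on the a.e.\ symmetry $\pa_k\pa_h u=\pa_h\pa_k u$, valid since $u\in W^{2,p}_{\loc}(\Omega)$.

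Then I would integrate over $B_r$ term by term. The first term on the right is $\div w$ for the vector field $w_h=\langle Z,\nabla u\rangle\,(A\nabla u)_h$; since $Z$ is a bounded Lipschitz field on $B_{\delta_1}$ (because $A$ is Lipschitz with $A(0)=\id$, whence $\mu=1+O(|x|)$ is bounded away from $0$, cf.\ \cite{GaSVG}) and $u\in C^{1}_{\loc}(\Omega)\cap W^{2,p}_{\loc}(\Omega)$, the field $w$ is continuous on $\overline{B_r}$ with $\div w\in L^1(B_r)$, so the Gauss--Green formula \eqref{eq:gauss-green} gives $\int_{B_r}\div w=\int_{S_r}\langle Z,\nabla u\rangle\,\langle A\nabla u,\nu\rangle$. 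The second term integrates to $-\int_{B_r} a_{hl}\,\pa_h Z_k\,\pa_k u\,\pa_l u$, which is exactly the last term of the statement after renaming $k=j$. For the third term I would invoke that $u$ is a strong solution of \eqref{div eq}, so $\pa_h(a_{hl}\,\pa_l u)=\div(A\nabla u)=-(V(x)u+f(x,u))$ a.e., which produces $\int_{B_r}\langle Z,\nabla u\rangle(V(x)u+f(x,u))$. Collecting these three contributions yields $I$; multiplying by $2$ and adding back the first summand gives the claimed identity.

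The one genuinely delicate point is the legitimacy of the integration by parts in the non-Lipschitz regime (e.g.\ $q=1$, or $f$ merely bounded), where $D^2u$ exists only a.e.; this is dealt with exactly as in the proof of Proposition~\ref{der D}, i.e.\ by applying the Gauss--Green formula \eqref{eq:gauss-green} for continuous vector fields with $L^1$ divergence, together with the a.e.\ symmetry of the mixed second weak derivatives of $u$. Everything else is bookkeeping with the summation convention.
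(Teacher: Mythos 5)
Your proposal is correct and follows essentially the same route as the paper's own proof: expand $\langle Z,\nabla(\langle A\nabla u,\nabla u\rangle)\rangle$ using the symmetry of $A$, integrate the second-order term by parts via the weak Gauss--Green formula (\ref{eq:gauss-green}) applied to $w=\langle Z,\nabla u\rangle A\nabla u$, and use that $u$ is a strong solution to replace $\div(A\nabla u)$ by $-(V(x)u+f(x,u))$. The justification you give (a.e.\ symmetry of $D^2u$ for $u\in W^{2,p}_{\loc}$, Lipschitz boundedness of $Z$, and the $L^1$-divergence form of the divergence theorem) matches the paper's treatment.
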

\begin{proof}
Similarly as in the proof of \cite[Lemma A.9]{GaSVG}, we compute
\begin{equation}\label{27sep1}
\begin{split}
\int_{B_r}  \left\langle Z, \nabla \left(\langle A\nabla u, \nabla u \rangle \right)\right\rangle &= \int_{B_r} \langle Z, \nabla a_{lh} \rangle \pa_h u\, \pa_l u + 2 \int_{B_r} \langle Z, \nabla \pa_l u\rangle a_{hl} \,\pa_h u \\
& = \int_{B_r} \langle Z, \nabla a_{lh} \rangle \pa_h u\, \pa_l u + 2 \int_{B_r} \pa_l \left( \langle Z,\nabla u \rangle a_{hl} \, \pa_h u \right) \\
& - 2 \int_{B_r} \langle Z, \nabla u\rangle \pa_l\left( a_{hl} \, \pa_h u \right) - 2 \int_{B_r} a_{hl}\, \pa_l Z_j \, \pa_j u \, \pa_h u.
\end{split}
\end{equation}
The second integral on the right hand side gives
\begin{equation*}
 \int_{B_r} \pa_l \left( \langle Z,\nabla u \rangle a_{hl} \, \pa_h u \right)  = \int_{S_r}  \langle Z,\nabla u \rangle a_{hl} \, \pa_h u \, \nu_l=  \int_{S_r}  \langle Z,\nabla u \rangle \langle A \nabla u, \nu \rangle,
\end{equation*}
Here we have used the divergence theorem again in its weak form (\ref{eq:gauss-green}) with the vector field $w= \langle Z,\nabla u \rangle A \nabla u$. Moreover, the third integral on the right hand side of (\ref{27sep1}) gives
\begin{equation*}
 \int_{B_r} \langle Z, \nabla u\rangle \pa_l\left( a_{hl} \, \pa_h u \right) =  \int_{B_r} \langle Z, \nabla u\rangle \div( A \nabla u)  = -\int_{B_r}  \langle Z,\nabla u \rangle \bigl(V(x)u+ f(x,u)\bigr)
\end{equation*}
Plugging these identities into \eqref{27sep1}, we obtain the result.
\end{proof}

\begin{lemma}\label{lem: as A.10}
For any $r \in (0,\delta_1)$, we have that
\begin{align*}
r \int_{S_r}&  \langle A \nabla u , \nabla u \rangle  = \int_{B_r} \div Z   \langle A \nabla u, \nabla u\rangle  + \int_{B_r}  \langle Z, \nabla a_{hl} \rangle \pa_h u \, \pa_l u \\
+& 2 \int_{S_r}  \langle Z, \nabla u \rangle \langle A \nabla u,\nu \rangle 
 - 2 \int_{B_r}  a_{hl} \, \pa_h Z_j \, \pa_j u \, \pa_l u + 2\int_{B_r}\bigl(V(x)u+  f(x,u)\bigr) \langle Z, \nabla u \rangle.
\end{align*}
\end{lemma}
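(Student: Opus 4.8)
The plan is to derive Lemma~\ref{lem: as A.10} by a Rellich--Pohozaev-type identity, exactly the kind used to pass from a pointwise differential identity to a boundary/volume balance. Concretely, I would start from the divergence-theorem computation applied to the vector field $w = \langle A\nabla u,\nabla u\rangle\, Z$. Since $\langle Z,\nu\rangle = r$ on $S_r$ (a fact recorded just before the statement), the surface integral of $\langle w,\nu\rangle$ over $S_r$ is precisely $r\int_{S_r}\langle A\nabla u,\nabla u\rangle$, which is the left-hand side we want. On the other hand, $\div w = \div Z\,\langle A\nabla u,\nabla u\rangle + \langle Z,\nabla(\langle A\nabla u,\nabla u\rangle)\rangle$, so integrating this over $B_r$ and invoking the weak Gauss--Green formula \eqref{eq:gauss-green} (valid here since $u\in W^{2,p}_{\loc}$ for all $p$, so $w$ is continuous with $L^1$ divergence) yields
\[
r\int_{S_r}\langle A\nabla u,\nabla u\rangle = \int_{B_r}\div Z\,\langle A\nabla u,\nabla u\rangle + \int_{B_r}\bigl\langle Z,\nabla(\langle A\nabla u,\nabla u\rangle)\bigr\rangle.
\]

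The second step is simply to substitute the expression for $\int_{B_r}\langle Z,\nabla(\langle A\nabla u,\nabla u\rangle)\rangle$ supplied by Lemma~\ref{lem: as A.9}. That lemma expands the term into the four pieces: $\int_{B_r}\langle Z,\nabla a_{hl}\rangle\pa_h u\,\pa_l u$, the boundary term $2\int_{S_r}\langle Z,\nabla u\rangle\langle A\nabla u,\nu\rangle$, the source term $2\int_{B_r}\langle Z,\nabla u\rangle(V(x)u+f(x,u))$, and $-2\int_{B_r}a_{hl}\,\pa_h Z_j\,\pa_j u\,\pa_l u$. Plugging this into the displayed identity above gives verbatim the claimed formula of Lemma~\ref{lem: as A.10}, with the $\int_{B_r}\div Z\,\langle A\nabla u,\nabla u\rangle$ term carried along from the product rule and the rest coming directly from Lemma~\ref{lem: as A.9}. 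No rearrangement beyond bookkeeping of signs is needed.

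The only genuine point requiring care — and hence the main obstacle — is the justification of the Gauss--Green / divergence theorem in this low-regularity setting: $u$ is only $W^{2,p}_{\loc}\cap C^{1,\alpha}_{\loc}$, not $C^2$, so $\langle A\nabla u,\nabla u\rangle$ is not classically differentiable. As in the proof of Proposition~\ref{der D}, one resolves this by noting that $w = \langle A\nabla u,\nabla u\rangle Z$ is continuous on $\overline{B_r}$ (since $A$ is Lipschitz and $\nabla u$ is Hölder continuous) and that $\div w \in L^1(B_r)$ because $\nabla(\langle A\nabla u,\nabla u\rangle)$ involves $\nabla a_{hl}\in L^\infty$ times $|\nabla u|^2\in L^\infty$ and $a_{hl}\,D^2u\,\nabla u \in L^p_{\loc}\subset L^1_{\loc}$; then \eqref{eq:gauss-green} applies. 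The same continuity observation is also what legitimizes the intermediate integrations by parts inside Lemma~\ref{lem: as A.9}, so once those are granted there is nothing further to check. The identity is then an exact algebraic consequence of the product rule for the divergence combined with Lemma~\ref{lem: as A.9}, and I would present it in two short displays as above.
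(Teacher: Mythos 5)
Your proposal is correct and follows essentially the same route as the paper: apply the weak Gauss--Green formula \eqref{eq:gauss-green} to the vector field $\langle A\nabla u,\nabla u\rangle\,Z$, use $\langle Z,\nu\rangle=r$ on $S_r$, and substitute the expansion of $\int_{B_r}\langle Z,\nabla(\langle A\nabla u,\nabla u\rangle)\rangle$ from Lemma~\ref{lem: as A.9}. Your extra remark justifying the $W^{2,p}_{\loc}\cap C^{1,\alpha}_{\loc}$ regularity needed for \eqref{eq:gauss-green} is a welcome, if implicit in the paper, point of care.
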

\begin{proof}
Applying the weak divergence theorem (\ref{eq:gauss-green}) to the vector field $\langle A \nabla u , \nabla u \rangle Z$, we find that, for $r \in (0,\delta_1)$,
\[
\int_{S_r}   \langle A \nabla u , \nabla u \rangle \langle Z,\nu\rangle = \int_{B_r} \div Z \langle A \nabla u , \nabla u \rangle + \langle Z, \nabla ( \langle A \nabla u , \nabla u \rangle  ) \rangle.
\]
Recalling that $\langle Z,\nu \rangle = r$ on $S_r$, the thesis follows directly from Lemma \ref{lem: as A.9}.
\end{proof}

We may now complete the

\begin{proof}[Proof of Proposition~\ref{sec:general-case-der-D-1}]
Let $r \in (0,\delta_1)$. Due to Lemma \ref{lem: as A.10}, we have that
\begin{align}
D_1'(r) & =  \int_{S_r}  \langle A\nabla u, \nabla u \rangle \nonumber\\
& = \frac1r \int_{B_r} \div Z \langle A \nabla u, \nabla u\rangle + \frac1r \int_{B_r}  \langle Z, \nabla a_{hl} \rangle \pa_h u \, \pa_l u + \frac2r \int_{S_r}  \langle Z, \nabla u \rangle \langle A \nabla u,\nu \rangle \nonumber \\
&  - \frac2r \int_{B_r}  a_{hl} \, \pa_h Z_j \, \pa_j u \, \pa_l u 
 + \frac2r\int_{B_r}  \bigl(V(x)u +f(x,u)\bigr) \langle Z, \nabla u \rangle. \label{eq-appendix-1-1}
\end{align}
We separately consider the integrals on the RHS of this equation. 
Recalling  that $\div Z = N+ O(r)$ by (\ref{eq:div-expansion}), we have that 
\begin{equation}
  \label{eq:first-int}
\int_{B_r} \div Z \langle A \nabla u, \nabla u\rangle = \bigl(N+O(r)\bigr) \int_{B_r} \langle A \nabla u, \nabla u\rangle = \bigl(N+O(r)\bigr)D_1(r).
\end{equation}
Recalling moreover (\ref{eq:Z-pointwise-exp}), we also have that 
\begin{align}
\Bigl|\frac{1}{r}\int_{B_r}  \langle Z, \nabla a_{hl} \rangle \pa_h u \, \pa_l u\Bigr|\le \frac{\|Z\|_{L^\infty(B_r)} \|a_{hl}\|_{L^\infty(B_r)} }{r} 
\int_{B_r} |\pa_h u\, \pa_l u| \le O(1) \int_{B_r} |\nabla u|^2 \nonumber\\
  \label{eq:second-int}
 \le O(1) D_1(r)
\end{align}
For third integral, we compute, using the symmetry of $A$, 
\begin{equation}
  \label{eq:third-int}
\int_{S_r}  \langle Z, \nabla u \rangle \langle A \nabla u,\nu \rangle =  \int_{S_r} \frac{1}{\mu}\langle A x, \nabla u \rangle \langle A \nabla u,\nu \rangle = r \int_{S_r} \frac{\langle A \nabla u, \nu \rangle^2}{\mu}.
\end{equation}
Finally, to treat the fourth integral, we put $B(x)= A(x)-\id = (b_{ij}(x))_{ij} \in \R^{N \times N}$, so that 
$B(0)=0$ by (\ref{eq:extra-assumption}) and 
\begin{equation}
  \label{eq:Or-bij}
\|b_{ij}\|_{L^\infty(B_r)}=O(r).   
\end{equation}
We then proceed as in \cite[p. 739]{GaSVG}. Since 
$Z_j= \frac{a_{jl}x_l}{\mu}$, we have that 
$$
\partial_h Z_j = \frac{a_{jh}}{\mu} + \frac{[\partial_h a_{jl}] x_l}{\mu} - \frac{ \partial_h \mu}{\mu^2}
$$
and therefore 
\begin{align*}
a_{hl} \, \pa_h Z_j \, \pa_j u \, \pa_l u &= \frac{a_{hl} a_{jh}\, \pa_j u \, \pa_l u}{\mu}  + \frac{x_l\,a_{hl}
[\partial_h a_{jl}]  \, \pa_j u \, \pa_l u}{\mu} - \frac{x_l\, a_{hl} \, a_{jl} \, \partial_h \mu\, \pa_j u \, \pa_l u }{\mu^2} 
\end{align*}
Since furthermore
\begin{equation}
  \label{eq:Or-mu}
\|\frac{1}{\mu}-1\|_{L^\infty(B_r)}=O(r) \qquad \text{and}\qquad \|\frac{1}{\mu^2}-1\|_{L^\infty(B_r)}=O(r),   
\end{equation}
we find that 
\begin{align}
\int_{B_r}  a_{hl} \, \pa_h Z_j \, \pa_j u \, \pa_l u  &= (1+ O(1))\int_{B_r}a_{hl} a_{jh}\, \pa_j u \, \pa_l u 
+ (1+ O(1)) \int_{B_r} x_l\,a_{hl}
[\partial_h a_{jl}]  \, \pa_j u \, \pa_l u \nonumber\\
& - (1+O(1)) \int_{B_r}  x_l\, a_{hl} \, a_{jl} \, \partial_h \mu\, \pa_j u \, \pa_l u .    \label{eq:fourth-int-1}  
\end{align}
We then note that 
\begin{equation}
\label{eq:fourth-int-2}
\int_{B_r}a_{hl} a_{jh}\, \pa_j u \, \pa_l u =(1+O(r))\!\int_{B_r}a_{lj}\, \pa_j u \, \pa_l u  =(1+O(r))D_1(r),
\end{equation}
since $a_{hl} a_{jh} = a_{lj} + a_{hl}b_{jh}$ and 
$$
\Bigl|\int_{B_r} a_{hl} b_{jh}\, \pa_j u \, \pa_l u \Bigr| \le \|a_{hl}\|_{L^\infty(B_r)} \|b_{jh}\|_{L^\infty(B_r)} \int_{B_r} |\partial_j u \partial_l u| \le O(r) D_1(r)
$$
by (\ref{eq:Or-bij}). Moreover, 
\begin{equation}
\Bigl|\int_{B_r}x_l [\partial_h a_{jl}]  a_{hl}\, \pa_j u \, \pa_l u\Bigr| \le r \|a_{hl} \,\partial_h a_{jl}\|_{L^\infty(B_r)}  \int_{B_r} |\pa_j u  \pa_l u|
\le O(r)D_1(r) \label{eq:fourth-int-3}
\end{equation}
and 
\begin{equation}
\Bigl|\int_{B_r}x_l a_{hl} \, a_{jl} \, \pa_j u \, \pa_l u \partial_h \mu \Bigr| \le r \|a_{hl}\,a_{jl}\,\partial_h \mu\|_{L^\infty(B_r)} 
\int_{B_r}|\pa_j u \, \pa_l u|\le O(r)D_1(r).  \label{eq:fourth-int-4}
\end{equation}
In the latter estimate, we used that $\partial_h \mu$ is bounded since $\mu$ is Lipschitz on $B_{\delta_1}$ by (A1). 
Inserting (\ref{eq:fourth-int-2}),~(\ref{eq:fourth-int-3}) and (\ref{eq:fourth-int-4}) in (\ref{eq:fourth-int-1}) gives that 
\begin{equation}
\int_{B_r}  a_{hl} \, \pa_h Z_j \, \pa_j u \, \pa_l u  = (1+ O(1))\int_{B_r}\langle A \nabla u, \nabla u \rangle.   \label{eq:fourth-int}  
\end{equation}
Finally, inserting (\ref{eq:first-int}), (\ref{eq:second-int}), (\ref{eq:third-int}) and (\ref{eq:fourth-int}) in (\ref{eq-appendix-1-1}) yields \eqref{D1-prime-formula}, as desired.
\end{proof}


\begin{thebibliography}{1}

\bibitem{bartsch.willem:95} 
T. Bartsch, M. Willem.
\newblock On an elliptic equation with concave and convex nonlinearities.
\newblock {\em Proc. Amer. Math. Soc.} 123:3555--3561, 1995.

\bibitem{de-Figueiredo.gossez:92}
\newblock D.G. de Figueiredo, J.P. Gossez.
\newblock Strict monotonicity of eigenvalues and unique continuation.
\newblock {\em Comm. Partial Differential Equations} 17: 339--346, 1992.

\bibitem{du:preprint}
M. Du. 
\newblock Small energy solutions of Neumann boundary value problems with a sublinear nonlinearity.
\newblock Preprint.

\bibitem{heinz:95}
H.P. Heinz.
\newblock On the number of solutions of nonlinear Schr\"odinger equations and on unique continuation.
\newblock {\em J. Differential Equations} 116: 149--171, 1995.

\bibitem{hofmann.mitrea.taylor:2010}
S. Hofmann, M. Mitrea and M. Taylor
\newblock Singular integrals and elliptic boundary problems on regular Semmes-Kenig-Toro domains.
\newblock {\em Int. Math. Res. Not. IMRN 2010}, no. 14, 2567--2865.

\bibitem{GarLin86}
N.~Garofalo and F.-H. Lin.
\newblock Monotonicity properties of variational integrals, {$A_p$} weights and
  unique continuation.
\newblock {\em Indiana Univ. Math. J.}, 35(2):245--268, 1986.

\bibitem{GarLin91}
N.~Garofalo and F.-H. Lin.
\newblock Unique continuation for elliptic operators: a geometric-variational approach.
\newblock {\em Comm. Pure Appl. Math.}, 40:347--366, 1987.

\bibitem{GaSVG}
N.~Garofalo and M.~Smit~Vega~Garcia.
\newblock New monotonicity formulas and the optimal regularity in the signorini
  problem with variable coefficients.
\newblock {\em Advances in Mathematics}, 262:682 -- 750, 2014.

\bibitem{GT}
D.~Gilbarg and N.~S.~Trudinger,
\newblock Elliptic partial differential equations
  of second order
\newblock Classics in Mathematics, Springer-Verlag, Berlin, 2001, Reprint of the 1998 edition.


\bibitem{han.lin:2011}
Q.~Han and F.~H.~Lin.
\newblock Elliptic partial differential equations. Second edition.
\newblock Courant Lecture Notes in Mathematics, 1. Courant Institute of Mathematical Sciences, New York; American Mathematical Society, Providence, RI, 2011. 

\bibitem{jerison.kenig:85}
D. Jerison, C.E. Kenig.
\newblock Unique continuation and absence of positive eigenvalues for Schr\"odinger operators. With an appendix by E. M. Stein.
\newblock {\em Ann. of Math.} (2) 121:463--494, 1985. 

\bibitem{kenig:89}
C.E. Kenig. 
\newblock Restriction theorems, Carleman estimates, uniform Sobolev inequalities and unique continuation. Harmonic analysis and partial differential equations (El Escorial, 1987), 69--90.
\newblock Lecture Notes in Math., 1384, Springer, Berlin, 1989.

\bibitem{koch.tataru:2001}
H. Koch and D. Tataru.
\newblock Recent results on unique continuation for second order elliptic equations. Carleman estimates and applications to uniqueness and control theory (Cortona, 1999), 73--84.
\newblock Progr. Nonlinear Differential Equations Appl., 46, Birkh\"auser Boston, Boston, MA, 2001.

\bibitem{koch.tataru:2006}
H. Koch and D. Tataru.
\newblock Carleman estimates and absence of embedded eigenvalues.
\newblock {\em Comm. Math. Phys.} 267: 419--449, 2006.

\bibitem{Kuk}
I.~Kukavica.
\newblock Quantitative uniqueness for second-order elliptic operators.
\newblock {\em Duke Math. J.} 91(2):225--240, 02 1998.

\bibitem{liu.su.weth:2006}
Z.L. Liu, J.B. Su, T. Weth.
\newblock Compactness results for Schr\"odinger equations with asymptotically linear terms.
\newblock{\em J. Differential Equations} 231: 501--512, 2006. 

\bibitem{parini.weth:2015}
E. Parini, T. Weth.
\newblock Existence, unique continuation and symmetry of least energy nodal solutions to sublinear Neumann problems.
\newblock {\em Math. Z.} 280: 707--732, 2015. 

\bibitem{vazquez:2007}
J.~L. Vazquez.
\newblock The porous medium equation.
\newblock Oxford Science Publications, 2007.


\end{thebibliography}
  
\end{document}